\numberwithin{equation}{section}
\newtheorem{theorem}{Theorem}[section]
\newtheorem{lemma}[theorem]{Lemma}
\newtheorem{corollary}[theorem]{Corollary}
\newtheorem{proposition}[theorem]{Proposition}
\theoremstyle{definition}
\newtheorem{definition}[theorem]{Definition}
\newtheorem{example}[theorem]{Example}
\newtheorem{remark}[theorem]{Remark}
\renewcommand{\epsilon}{\varepsilon}
\newcommand{\C}{\mathbb{C}}
\newcommand{\N}{\mathbb{N}}
\newcommand{\R}{\mathbb{R}}
\newcommand{\id}{\operatorname{id}}
\newcommand{\ev}{\operatorname{ev}}
\newcommand{\ran}{\operatorname{ran}}
\newcommand{\vN}{\operatorname{vN}}
\title[Absence of algebraic relations and of zero divisors]{Absence of algebraic relations and of zero divisors under the assumption of finite non-microstates free Fisher information}
\author[T. Mai]{Tobias Mai}
\address{Universit\"{a}t des Saarlandes, FR $6.1-$Mathematik, 66123 Saarbr\"{u}cken, Germany }
\email{mai@math.uni-sb.de}
\author[R. Speicher]{Roland Speicher}
\address{Universit\"{a}t des Saarlandes, FR $6.1-$Mathematik, 66123 Saarbr\"{u}cken, Germany }
\email{speicher@math.uni-sb.de}
\author[M. Weber]{Moritz Weber}
\address{Universit\"{a}t des Saarlandes, FR $6.1-$Mathematik, 66123 Saarbr\"{u}cken, Germany }
\email{weber@math.uni-sb.de}
\date{\today}
\begin{document}

\begin{abstract}
We show that in a tracial and finitely generated $W^\ast$-probability space existence of conjugate variables in an appropriate sense exclude algebraic relations for the generators.

Moreover, under the assumption of finite non-microstates free Fisher information, we prove that there are no zero divisors in the sense that the product of any non-commutative polynomial in the generators with any element from the von Neumann algebra is zero if and only if at least one of those factors is zero.
\end{abstract}

\maketitle

\section{Introduction}

In a groundbreaking series of papers \cite{Voi-Entropy-I, Voi-Entropy-II, Voi-Entropy-III, Voi-Entropy-IV, Voi-Entropy-V, Voi-Entropy-VI} (see also the survey \cite{Voi-Entropy-Surv}), Voiculescu transferred the notion of entropy and Fisher information to the world of noncommutative probability theory. One of the most striking results which came out of this program is certainly the proof of the fact that the free group factors do not possess Cartan subalgebras in \cite{Voi-Entropy-III}, which gives in particular the solution of the by then longstanding open question, whether every separable $\text{II}_1$-factor contains Cartan subalgebras.

We should note that Voiculescu gave in fact two different approaches to entropy and Fisher information in the non-commutative setting. The first one is based on the notion of matricial microstates and defines free entropy $\chi$ first and deduces free Fisher information $\Phi$ from $\chi$, the second is based on the notion of conjugate variables with respect to certain non-commutative derivatives and defines free Fisher information $\Phi^\ast$ first and then deduces free entropy $\chi^\ast$ from this quantity. We want to note that both constructions lead independently to objects which are, compared to the classical theory, justifiably called entropy and Fisher information, but it is still not known whether they coincide, respectively.

In the classical case, as well as in the one-variable free case, finiteness of entropy or of Fisher information imply some regularity of the corresponding distribution of the variables; in particular, they have a density (with respect to Lebesgue measure). In the non-commutative situation, the notion of a density does not make any direct sense, but still it is believed that the existence of finite free entropy or finite free Fisher information (in any of the two approaches) should correspond to some regularity property of the considered non-commutative distributions. Thus one expects many ''smooth" properties for random variables $X_1,\dots,X_n$ for which either one of the quantities $\chi(X_1,\dots,X_n)$, 
$\chi^*(X_1,\dots,X_n)$, $\Phi(X_1,\dots,X_n)$, or $\Phi^*(X_1,\dots,X_n)$ is finite. In particular, it is commonly expected that such a finiteness excludes non-trivial algebraic relations between the considered random variables. Up to now there is no proof of such a general statement. We will show here such
a result in the case of finite non-microstates free Fisher information $\Phi^*(X_1\dots,X_n)$. Actually, we will not only prove the absence of algebraic relations between the $X_1,\dots,X_n$, but also show that such cannot hold locally on non-trivial Hilbert subspaces; more formally we show that there are no non-zero divisors in the affiliated von Neumann algebra.

In the second named approach, the so-called non-microstates approach, Voiculescu's definition of conjugate variables in \cite{Voi-Entropy-V} depends fundamentally on the existence of non-commutative derivatives. But this makes it necessary to assume right from the beginning that there are no algebraic relations between the generators of the underlying von Neumann algebra. Although this does not cause any serious trouble, this assumption might seem a bit unnatural and it could be -- depending on the particular situation -- quite hard to check.

The first part of our article is devoted to a slightly different approach to conjugate variables. We will give a definition, which will circumvent the initial assumption of absence of algebraic relations. In fact, we will show that absence of algebraic relations is actually a consequence of the existence of conjugate variables. Thus, a posteriori, it will turn out that our notion of conjugate variables actually agrees with Voiculescu's definition. Nevertheless, the assumption of algebraic freeness becomes redundant and the defining conditions are much easier to handle than those for the algebraic freeness of the generators.

Inspired by the observation that the existence of conjugate variables excludes algebraic relations, the second part of our article is concerned with similar but more advanced consequences of the finiteness of non-microstates Fisher information. More precisely, we will show that it excludes zero divisors in the sense that the product of any non-commutative polynomial in the generators with any element from the von Neumann algebra is zero if and only if at least one of those factors is zero.

In particular, this result allows to conclude that the distribution of any (of course, non-trivial) self-adjoint non-commutative polynomial in the generators does not have atoms, if the generators have finite free Fisher information $\Phi^*$. This extends the previous work of Shlyakhtenko and Skoufranis \cite{Shlyakhtenko-Skoufranis}.

\section{Existence of conjugate variables and absence of algebraic relations}\label{algebraic_relations}

Let $\C\langle Z_1,\dots,Z_n\rangle$ be the $\ast$-algebra of non-commutative polynomials in $n$ self-adjoint (formal) variables $Z_1,\dots,Z_n$. For $j=1,\dots,n$, we denote by $\partial_j$ the non-commutative derivative with respect to $Z_j$, i.e. $\partial_j$ is the unique derivation
$$\partial_j:\ \C\langle Z_1,\dots,Z_n\rangle \to \C\langle Z_1,\dots,Z_n\rangle \otimes \C\langle Z_1,\dots,Z_n\rangle$$
that satisfies $\partial_j Z_i = \delta_{i,j} 1 \otimes 1$ for $i=1,\dots,n$. Recall that being a derivation means for $\partial_j$ that
\begin{equation}\label{derivation-def}
\partial_j (P_1 P_2) = (\partial_j P_1) (1 \otimes P_2) + (P_1 \otimes 1) (\partial_j P_2)
\end{equation}
holds for all $P_1,P_2\in \C\langle Z_1,\dots,Z_n\rangle$. More explicitly, $\partial_j$ acts on monomials $P$ as
$$\partial_j P = \sum_{P = P_1 Z_j P_2} P_1 \otimes P_2,$$
where the sum runs over all decompositions $P=P_1Z_jP_2$ of $P$ with monomials $P_1,P_2$.

Throughout the following, let $(M,\tau)$ be a tracial $W^\ast$-probability space, which means that $M$ is a von Neumann algebra and $\tau$ is a faithful normal tracial state on $M$. For selfadjoint $X_1,\dots,X_n\in M$ we 
denote by $\vN(X_1,\dots,X_n)\subset M$ the von Neumann subalgebra of $M$ which is generated by $X_1,\dots,X_n$ and by
$L^2(X_1,\dots,X_n,\tau)\subset L^2(M,\tau)$ the $L^2$-space which is generated by $X_1,\dots,X_n$ with respect to the inner product given by $\langle P,Q\rangle := \tau(PQ^\ast)$.

In the following, we will denote by $\ev_X$, for a given $n$-tuple $X=(X_1,\dots,X_n)$ of self-adjoint elements of $M$, the homomorphism
$$\ev_X:\ \C\langle Z_1,\dots,Z_n\rangle \rightarrow \C\langle X_1,\dots,X_n\rangle \subset M$$
given by evaluation at $X=(X_1,\dots,X_n)$, i.e. the homomorphism $\ev_X$ is determined by $Z_i \mapsto X_i$.

For reasons of clarity, we put $P(X) := \ev_X(P)$ for any $P\in \C\langle Z_1,\dots,Z_n\rangle$ and $Q(X) := (\ev_X \otimes \ev_X)(Q)$ for any $Q\in\C\langle Z_1,\dots,Z_n\rangle^{\otimes 2}$.

\begin{definition}\label{conjugate_system}
Let $X_1,\dots,X_n\in M$ be self-adjoint elements. If there are elements $\xi_1,\dots,\xi_n\in L^2(M,\tau)$, such that
\begin{equation}\label{conjugate_relations}
(\tau\otimes\tau)((\partial_jP)(X_1,\dots,X_n)) = \tau(\xi_j P(X_1,\dots,X_n))
\end{equation}
is satisfied for each non-commutative polynomial $P\in\C\langle Z_1,\dots,Z_n\rangle$ and for $j=1,\dots,n$, then we say that $(\xi_1,\dots,\xi_n)$ \emph{satisfies the conjugate relations for $(X_1,\dots,X_n)$}.

If, in addition, $\xi_1,\dots,\xi_n$ belong to $L^2(X_1,\dots,X_n,\tau)$, we say that $(\xi_1,\dots,\xi_n)$ is the \emph{conjugate system for $(X_1,\dots,X_n)$}. 
\end{definition}

Like in the usual setting, we have the following.

\begin{remark}
Let $\pi$ be the orthogonal projection from $L^2(M,\tau)$ to $L^2(X_1,\dots,X_n,\tau)$. It is easy to see that if $(\xi_1,\dots,\xi_n)$ satisfies the conjugate relations for $(X_1,\dots,X_n)$, then $(\pi(\xi_1),\dots,\pi(\xi_n))$ satisfies the conjugate relations for $(X_1,\dots,X_n)$ as well, and is therefore a conjugate system for $(X_1,\dots,X_n)$.

It is an easy consequence of its defining property \eqref{conjugate_relations} that a conjugate system $(\xi_1,\dots,\xi_n)$ for $(X_1,\dots,X_n)$ is unique if it exists. 
\end{remark}

Note that our notion of conjugate relations and conjugate variables differs from the usual definition which was given by Voiculescu in \cite{Voi-Entropy-V}, roughly speaking, just by the placement of brackets. To be more precise, in \eqref{conjugate_relations}, we first apply the derivative $\partial_j$ to the given non-commutative polynomial $P$ before we apply the evaluation at $X=(X_1,\dots,X_n)$, instead of applying the evaluation first, which consequently makes it necessary to have in the second step a well-defined derivation on $\C\langle X_1,\dots,X_n\rangle$ corresponding to $\partial_j$.

From a more abstract point of view, this idea is in the same spirit as \cite[Lemma 3.2]{Shlyakhtenko} but only on a purely algebraic level. In fact, we used the surjective homomorphism $\ev_X: \C\langle Z_1,\dots,Z_n\rangle \rightarrow \C\langle X_1,\dots,X_n\rangle$ in order to pass from $(\C\langle X_1,\dots,X_n\rangle,\tau)$ to the non-commutative probability space $(\C\langle Z_1,\dots,Z_n\rangle,\tau_X)$, where $\tau_X := \tau\circ\ev_X$. Due to this lifting, the algebraic relations between the generators disappear whereas the relevant information about their joint distribution remains unchanged.

Our aim is to show that the existence of a conjugate system guarantees that $X_1,\dots,X_n$ do not satisfy any algebraic relations. Obviously, we can rephrase this in more algebraic terms by saying that the two sided ideal
$$I^1_X := \{P \in \C\langle Z_1,\dots,Z_n\rangle|\ P(X_1,\dots,X_n) = 0\}$$
of $\C\langle Z_1,\dots,Z_n\rangle$ is the zero ideal. But this exactly means that the evaluation homomorphism $\ev_X$ is in fact an isomorphism. Hence, if this is shown, we can immediately define a non-commutative derivation
$$\hat{\partial}_j:\ \C\langle X_1,\dots,X_n\rangle \rightarrow \C\langle X_1,\dots,X_n\rangle \otimes \C\langle X_1,\dots,X_n\rangle,$$
where the terminology derivation has to be understood with respect to the $\C\langle X_1,\dots,X_n\rangle$-bimodule structure of $\C\langle X_1,\dots,X_n\rangle^{\otimes 2}$ like in \eqref{derivation-def}.

Surprisingly, it turns out that, in order to prove $I_X^1=\{0\}$, it is helpful to consider this question at once together with the question of the existence of well-defined derivations $\hat{\partial}_j$ on $\C\langle X_1,\dots,X_n\rangle$.

\begin{proposition}\label{derivation_existence_1}
As before, let $(M,\tau)$ be a tracial $W^\ast$-probability space and let self-adjoint $X_1,\dots,X_n\in M$ be given. Assume that there are elements $\xi_1,\dots,\xi_n\in L^2(M,\tau)$, such that $(\xi_1,\dots,\xi_n)$ satisfies the conjugate relations \eqref{conjugate_relations} for $(X_1,\dots,X_n)$.

Corresponding to $X=(X_1,\dots,X_n)$, we introduce the following two-sided ideals of $\C\langle Z_1,\dots,Z_n\rangle$ and of $\C\langle Z_1,\dots,Z_n\rangle^{\otimes 2}$, respectively: 
$$I_X^1 := \{P \in \C\langle Z_1,\dots,Z_n\rangle|\ P(X_1,\dots,X_n) = 0\}$$
and
$$I_X^2 := \{Q \in \C\langle Z_1,\dots,Z_n\rangle^{\otimes 2}|\ Q(X_1,\dots,X_n) = 0\}.$$
Then, for each $j=1,\dots,n$,
$$P + I_X^1 \mapsto \partial_j P + I_X^2$$
induces a well-defined derivation
$$\hat{\partial}_j:\ \C\langle Z_1,\dots,Z_n\rangle/I_X^1 \rightarrow \C\langle Z_1,\dots,Z_n\rangle^{\otimes 2}/I_X^2.$$
\end{proposition}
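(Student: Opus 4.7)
The statement splits naturally into two pieces: well-definedness of the map on the quotient (i.e., $\partial_j I_X^1 \subseteq I_X^2$) and the Leibniz rule for the induced map. My plan is to focus essentially all the work on the well-definedness, since the Leibniz rule will transfer formally to the quotient once one checks that the standard outer $\C\langle Z_1,\dots,Z_n\rangle$-bimodule structure on $\C\langle Z_1,\dots,Z_n\rangle^{\otimes 2}$ descends to make $\C\langle Z_1,\dots,Z_n\rangle^{\otimes 2}/I_X^2$ a $\C\langle Z_1,\dots,Z_n\rangle/I_X^1$-bimodule; this in turn is immediate from the fact that $I_X^1$ and $I_X^2$ are two-sided ideals compatible with evaluation at $X$.

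For well-definedness, I would fix $P \in I_X^1$ and, crucially, sandwich it between arbitrary polynomials $P_1, P_2 \in \C\langle Z_1,\dots,Z_n\rangle$. The polynomial $P_1 P P_2$ still lies in $I_X^1$, and applying the Leibniz rule \eqref{derivation-def} twice yields
$$\partial_j(P_1 P P_2) = (\partial_j P_1)(1 \otimes P P_2) + (P_1 \otimes 1)(\partial_j P)(1 \otimes P_2) + (P_1 P \otimes 1)(\partial_j P_2).$$
After evaluation at $X$, the first and third summands collapse because they contain the factor $P(X)=0$, leaving only $(P_1(X)\otimes 1)(\partial_j P)(X)(1\otimes P_2(X))$.

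Next I would apply the conjugate relations \eqref{conjugate_relations} to the polynomial $P_1 P P_2$: the right-hand side equals $\tau(\xi_j P_1(X)P(X)P_2(X)) = 0$, and hence
$$(\tau\otimes\tau)\bigl((P_1(X)\otimes 1)(\partial_j P)(X)(1\otimes P_2(X))\bigr) = 0$$
for every choice of $P_1, P_2$. Writing $(\partial_j P)(X) = \sum_k A_k \otimes B_k$ with $A_k, B_k \in \C\langle X_1,\dots,X_n\rangle$, this becomes $\sum_k \tau(P_1(X) A_k)\tau(B_k P_2(X)) = 0$; by traciality I would interpret this as saying that $\sum_k A_k \otimes B_k$, viewed as a vector in the Hilbert space tensor product $L^2(X_1,\dots,X_n,\tau)\otimes L^2(X_1,\dots,X_n,\tau)$, is orthogonal to every elementary tensor of the form $P_1(X)^\ast \otimes P_2(X)^\ast$. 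Density of $\C\langle X_1,\dots,X_n\rangle$ in $L^2(X_1,\dots,X_n,\tau)$ then forces the vector to vanish, and since the algebraic tensor product $\C\langle X_1,\dots,X_n\rangle \otimes \C\langle X_1,\dots,X_n\rangle$ embeds injectively into this Hilbert tensor product, I conclude $(\partial_j P)(X) = 0$, i.e.\ $\partial_j P \in I_X^2$.

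The main obstacle will be precisely the need to generate enough testing freedom out of \eqref{conjugate_relations}: plugging $P$ itself directly into the conjugate relations only yields the very weak information $(\tau\otimes\tau)((\partial_j P)(X)) = 0$, which is nowhere near sufficient to conclude that an element of the tensor product vanishes. The sandwiching trick, which exploits the fact that $P_1 P P_2$ is again in $I_X^1$ for all $P_1, P_2$, is what unlocks the argument by effectively letting the conjugate relations test $(\partial_j P)(X)$ against a dense family of simple tensors. Once well-definedness is secured, the Leibniz rule for $\hat{\partial}_j$ follows at once by projecting the corresponding identity for $\partial_j$ to the quotients.
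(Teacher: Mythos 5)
Your proposal is correct and follows essentially the same route as the paper: the same sandwiching of $P$ between arbitrary $P_1,P_2$, the same use of the Leibniz rule and of \eqref{conjugate_relations} to get $(\tau\otimes\tau)\bigl((P_1(X)\otimes 1)(\partial_jP)(X)(1\otimes P_2(X))\bigr)=0$, and then a faithfulness argument to conclude $(\partial_jP)(X)=0$. The only cosmetic difference is the last step: you test $(\partial_jP)(X)$ against the dense family of elementary tensors $P_1(X)^\ast\otimes P_2(X)^\ast$ in the Hilbert space tensor product and invoke injectivity of the algebraic into the Hilbert tensor product, whereas the paper makes the single choice $Q=(\partial_jP)^\ast$ in its $\sharp$-pairing to read off the $L^2$-norm of $(\partial_jP)(X)$ directly --- these are the same computation phrased differently.
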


Before starting the proof, let us introduce a binary operation $\sharp$ on the algebraic tensor product $M \otimes M$ by bilinear extension of
$$(a_1 \otimes a_2) \sharp (b_1 \otimes b_2) := (a_1 b_1) \otimes (b_2 a_2).$$ 

\begin{proof}[Proof of Proposition \ref{derivation_existence_1}]
Obviously, it is sufficient to show that $P\in I_X^1$ implies $\partial_j P \in I_X^2$ for all $j=1,\dots,n$. For seeing this, let $P\in I_X^1$ be given. If we take arbitrary $P_1,P_2\in \C\langle Z_1,\dots,Z_n\rangle$, we have that
$$\partial_j (P_1 P P_2) = (\partial_j P_1) P P_2 + P_1 P (\partial_j P_2) + P_1 (\partial_j P) P_2$$
and therefore, since $P(X)=0$,
$$(\tau \otimes \tau)(\partial_j (P_1 P P_2) (X)) = (\tau \otimes \tau)(P_1(X) \partial_j P(X) P_2(X)).$$
Furthermore, according to \eqref{conjugate_relations}, we may deduce that
$$(\tau \otimes \tau)(\partial_j (P_1 P P_2) (X)) = \tau(\xi_j (P_1 P P_2)(X)) = 0.$$
Thus, we observe that
$$(\tau \otimes \tau)((P_1 \otimes P_2)(X) \sharp \partial_jP(X)) = (\tau \otimes \tau)(P_1(X) \partial_j P(X) P_2(X)) = 0$$
for all $P_1,P_2 \in \C\langle Z_1,\dots,Z_n\rangle$ and hence by linearity
$$(\tau \otimes \tau)(Q(X) \sharp \partial_jP(X)) = 0$$
for all $Q\in \C\langle Z_1,\dots,Z_n\rangle^{\otimes 2}$. If we apply this observation to $Q = (\partial_j P)^\ast$, we easily see that $\partial_j P(X) = 0$ (recall that $\tau$ was assumed to be faithful), which means that $\partial_j P \in I_X^2$. This shows that $\hat{\partial}_j$ is indeed well-defined.

Due to the obvious fact that $I_X^1$ is a two-sided ideal in $\C\langle Z_1,\dots,Z_n\rangle$ as well as $I_X^2$ is a two-sided $\ast$-ideal in $\C\langle Z_1,\dots,Z_n\rangle^{\otimes 2}$, we see that $\C\langle Z_1,\dots, Z_n\rangle/I_X^1$ is a $\ast$-algebra as well as $\C\langle Z_1,\dots,Z_n\rangle^{\otimes 2}/I_X^2$, where the multiplication and the involution are just defined via representatives. Using this, it is easy to check that $\hat{\partial}_1,\dots,\hat{\partial}_n$ are indeed derivations.
\end{proof}

Basic linear algebra shows that
$$\C\langle Z_1,\dots, Z_n\rangle/I_X^1 \cong \C\langle X_1,\dots,X_n\rangle$$
and
$$\C\langle Z_1,\dots,Z_n\rangle^{\otimes 2}/I_X^2 \cong \C\langle X_1,\dots,X_n\rangle^{\otimes 2}.$$
Hence, Proposition \ref{derivation_existence_1} immediately implies the following corollary.

\begin{corollary}\label{derivation_existence_2}
As before, let $(M,\tau)$ be a tracial $W^\ast$-probability space and let self-adjoint $X_1,\dots,X_n\in M$ be given. Assume that there are elements $\xi_1,\dots,\xi_n\in L^2(M,\tau)$, such that $(\xi_1,\dots,\xi_n)$ satisfies the conjugate relations \eqref{conjugate_relations} for $(X_1,\dots,X_n)$.

Then, for each $j=1,\dots,n$, there is a unique derivation
$$\hat{\partial}_j:\ \C\langle X_1,\dots,X_n\rangle \rightarrow \C\langle X_1,\dots,X_n\rangle \otimes \C\langle X_1,\dots,X_n\rangle$$
such that the following diagram commutes.
\begin{equation}\label{derivation_diagram}
\begin{xy}
 \xymatrix{\C\langle Z_1,\dots, Z_n\rangle \ar[rr]^{\partial_j} \ar@{->>}[dd]_{\ev_X} & & \C\langle Z_1,\dots,Z_n\rangle^{\otimes 2}\ar@{->>}[dd]^{\ev_X \otimes \ev_X}\\
          & & \\
           \C\langle X_1,\dots,X_n\rangle \ar[rr]^{\partial_j} \ar[rr]^{\hat{\partial}_j} & & \C\langle X_1,\dots,X_n\rangle^{\otimes 2}}
\end{xy}
\end{equation}
\end{corollary}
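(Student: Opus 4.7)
The plan is to deduce the corollary directly from Proposition \ref{derivation_existence_1} by transporting the induced derivation on the quotient through the two algebraic isomorphisms indicated in the paragraph preceding the statement.

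First, I would make those identifications precise. Since $\ev_X: \C\langle Z_1,\dots,Z_n\rangle \to \C\langle X_1,\dots,X_n\rangle$ is surjective with kernel $I_X^1$ by definition, the first isomorphism theorem yields a canonical $\ast$-algebra isomorphism $\C\langle Z_1,\dots,Z_n\rangle/I_X^1 \cong \C\langle X_1,\dots,X_n\rangle$. For the tensor square one argues, by choosing a vector-space complement of $I_X^1$ in $\C\langle Z_1,\dots,Z_n\rangle$, that $\ev_X \otimes \ev_X$ is surjective onto $\C\langle X_1,\dots,X_n\rangle^{\otimes 2}$ with kernel exactly $I_X^2$; this is the "basic linear algebra" remark in the excerpt, and it gives $\C\langle Z_1,\dots,Z_n\rangle^{\otimes 2}/I_X^2 \cong \C\langle X_1,\dots,X_n\rangle^{\otimes 2}$ as $\ast$-algebras.

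Next I would define $\hat{\partial}_j$ on $\C\langle X_1,\dots,X_n\rangle$ by conjugating the derivation furnished by Proposition \ref{derivation_existence_1} with these two isomorphisms. Explicitly, for $Y \in \C\langle X_1,\dots,X_n\rangle$ pick any $P \in \C\langle Z_1,\dots,Z_n\rangle$ with $\ev_X(P) = Y$ and set $\hat{\partial}_j(Y) := (\ev_X \otimes \ev_X)(\partial_j P)$. Well-definedness is precisely the content of Proposition \ref{derivation_existence_1}. Commutativity of diagram \eqref{derivation_diagram} is then built into the construction, and the Leibniz rule for $\hat{\partial}_j$ is inherited from that of $\partial_j$ because the vertical maps are $\ast$-algebra homomorphisms and respect the bimodule structures.

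Finally, for uniqueness, I would observe that any derivation $D: \C\langle X_1,\dots,X_n\rangle \to \C\langle X_1,\dots,X_n\rangle^{\otimes 2}$ that makes \eqref{derivation_diagram} commute must satisfy $D(X_i) = (\ev_X \otimes \ev_X)(\partial_j Z_i) = \delta_{i,j}(1 \otimes 1)$, and any derivation on $\C\langle X_1,\dots,X_n\rangle$ is determined by its values on the generators $X_1,\dots,X_n$ via repeated application of the Leibniz rule. I do not anticipate a real obstacle here, since all the genuine analytic content (using the conjugate relations and faithfulness of $\tau$) has already been discharged in Proposition \ref{derivation_existence_1}; what remains is purely algebraic bookkeeping, with the only mildly non-trivial step being the identification of $I_X^2$ as the kernel of $\ev_X \otimes \ev_X$.
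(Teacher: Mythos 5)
Your proposal is correct and follows essentially the same route as the paper: the paper likewise deduces the corollary from Proposition \ref{derivation_existence_1} via the identifications $\C\langle Z_1,\dots,Z_n\rangle/I_X^1 \cong \C\langle X_1,\dots,X_n\rangle$ and $\C\langle Z_1,\dots,Z_n\rangle^{\otimes 2}/I_X^2 \cong \C\langle X_1,\dots,X_n\rangle^{\otimes 2}$, calling this ``basic linear algebra.'' You have merely spelled out the bookkeeping (well-definedness, the Leibniz rule, and uniqueness via the values on generators) that the paper leaves implicit.
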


The fact that the diagram in \eqref{derivation_diagram} commutes immediately implies that the derivations $\hat{\partial}_j$, $j=1,\dots,n$, satisfy $\hat{\partial}_j(X_i) = \delta_{j,i} 1 \otimes 1$ for all $j,i=1,\dots,n$. Indeed,
\begin{align*}
\hat{\partial}_j(X_i) &= \hat{\partial}_j(\ev_X(Z_i))\\
                      &= (\ev_X \otimes \ev_X)(\partial_j Z_i)\\
                      &= (\ev_X \otimes \ev_X)(\delta_{j,i} 1 \otimes 1)\\
                      &= \delta_{j,i} 1 \otimes 1.
\end{align*}
For reasons of completeness, we want to mention that the converse is also true, so that both statements are in fact equivalent: If we assume that the derivations $\hat{\partial}_j$, $j=1,\dots,n$, satisfy $\hat{\partial}_j(X_i) = \delta_{j,i} 1 \otimes 1$ for all $j,i=1,\dots,n$, then
$$d_j := \hat{\partial}_j \circ \ev_X:\ \C\langle Z_1,\dots, Z_n\rangle \rightarrow \C\langle X_1,\dots,X_n\rangle^{\otimes 2}.$$
obviously defines a derivation, where we consider $\C\langle X_1,\dots,X_n\rangle^{\otimes 2}$ as $\C\langle Z_1,\dots,Z_n\rangle$-bimodule via evaluation $\ev_X$. Thus, we have for each $P\in\C\langle Z_1,\dots,Z_n\rangle$ and $j=1,\dots,n$ that
$$d_j(P) = \sum_{i=1}^n (\ev_X\otimes\ev_X)(\partial_i P) \sharp d_j(Z_i).$$
Since $d_j(Z_i) = \hat{\partial}_j(X_i) = \delta_{j,i} 1 \otimes 1$ holds for $i=1,\dots,n$ by assumption, we see that $d_j(P) = (\ev_X\otimes\ev_X)(\partial_j P)$. By definition of $d_j$, this gives
$$(\hat{\partial}_j\circ\ev_X)(P) = d_j(P) = ((\ev_X\otimes\ev_X) \circ \partial_j) (P),$$
which precisely means that the diagram in \eqref{derivation_diagram} commutes.

However, this allows us to rephrase Corollary \ref{derivation_existence_2} as follows.

\begin{corollary}\label{derivation_existence_3}
As before, let $(M,\tau)$ be a tracial $W^\ast$-probability space and let self-adjoint $X_1,\dots,X_n\in M$ be given. Assume that there are elements $\xi_1,\dots,\xi_n\in L^2(M,\tau)$, such that \eqref{conjugate_relations} is satisfied for $j=1,\dots,n$.

Then, for each $j=1,\dots,n$, there is a unique derivation
$$\hat{\partial}_j:\ \C\langle X_1,\dots,X_n\rangle \rightarrow \C\langle X_1,\dots,X_n\rangle \otimes \C\langle X_1,\dots,X_n\rangle,$$
such that $\hat{\partial}_j(X_i) = \delta_{j,i} 1 \otimes 1$ holds for all $i=1,\dots,n$.
\end{corollary}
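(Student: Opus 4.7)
My plan is to deduce this corollary straight from Corollary \ref{derivation_existence_2} by translating the statement that the diagram \eqref{derivation_diagram} commutes into the equivalent pointwise condition on generators. All of the analytic content (using the conjugate relations to show $P(X)=0$ forces $(\partial_j P)(X)=0$) has already been packaged into Proposition \ref{derivation_existence_1}, so what remains is purely algebraic.

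For existence, I would take the derivation $\hat{\partial}_j$ supplied by Corollary \ref{derivation_existence_2} and evaluate the commutative diagram on the generator $Z_i$. Since $\partial_j Z_i = \delta_{j,i} 1\otimes 1$ and $\ev_X(Z_i)=X_i$, this gives
$$\hat{\partial}_j(X_i) = \hat{\partial}_j(\ev_X(Z_i)) = (\ev_X\otimes\ev_X)(\partial_j Z_i) = \delta_{j,i}\, 1\otimes 1,$$
which is exactly what is required.

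For uniqueness, suppose $D\colon \C\langle X_1,\dots,X_n\rangle \to \C\langle X_1,\dots,X_n\rangle^{\otimes 2}$ is any derivation with $D(X_i)=\delta_{j,i}\,1\otimes 1$. I would precompose with $\ev_X$ to obtain a derivation $d_j := D\circ \ev_X\colon \C\langle Z_1,\dots,Z_n\rangle \to \C\langle X_1,\dots,X_n\rangle^{\otimes 2}$, where the target is regarded as a $\C\langle Z_1,\dots,Z_n\rangle$-bimodule via $\ev_X$. Because a derivation out of a free $\ast$-algebra is determined by its values on generators, iterating the Leibniz rule yields for every $P$ the identity
$$d_j(P) = \sum_{i=1}^n (\ev_X\otimes\ev_X)(\partial_i P) \sharp d_j(Z_i),$$
and since $d_j(Z_i) = D(X_i) = \delta_{j,i}\,1\otimes 1$ only the $i=j$ term survives, giving $d_j = (\ev_X\otimes\ev_X)\circ \partial_j$. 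Thus $D\circ \ev_X = \hat{\partial}_j \circ \ev_X$, and surjectivity of $\ev_X$ forces $D=\hat{\partial}_j$.

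There is no genuine obstacle: the nontrivial input (well-definedness of $\hat\partial_j$ on the quotient, which rests on the conjugate relations and on faithfulness of $\tau$) is already carried by Proposition \ref{derivation_existence_1}, and the rest is the standard free-algebra universal-property type reasoning spelled out in the paragraph preceding the statement. The only point worth being careful about is the bimodule convention on $\C\langle X_1,\dots,X_n\rangle^{\otimes 2}$, which is precisely the one encoded by the operation $\sharp$, so the formula above is literally what the Leibniz rule produces.
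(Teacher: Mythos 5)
Your proposal is correct and follows essentially the same route as the paper: the paper's justification for this corollary is precisely the paragraph between Corollary \ref{derivation_existence_2} and the statement, where the forward implication (evaluating the commutative diagram on $Z_i$) gives existence and the converse (precomposing with $\ev_X$ and expanding via the Leibniz rule with the $\sharp$ operation) gives the identity $D\circ\ev_X=(\ev_X\otimes\ev_X)\circ\partial_j$, hence uniqueness by surjectivity of $\ev_X$. No gaps.
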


This final conclusion shows that even without assuming algebraic freeness of the generators $X_1,\dots,X_n$, it is possible to define non-commutative derivations that behave exactly like if the generators would be algebraically free. In fact, this is the key observation to reach our desired result.

\begin{proposition}\label{derivations_relations}
Let $M$ be a tracial $W^\ast$-probability space and let self-adjoint elements $X_1,\dots,X_n\in M$ be given. Assume that there are derivations $$\hat{\partial}_j:\ \C\langle X_1,\dots,X_n\rangle \rightarrow \C\langle X_1,\dots,X_n\rangle \otimes \C\langle X_1,\dots,X_n\rangle$$
for $j=1,\dots,n$, such that $\hat{\partial}_j(X_i) = \delta_{j,i} 1 \otimes 1$ holds for all $i=1,\dots,n$. Then there is no non-zero polynomial $P\in\C\langle Z_1,\dots,Z_n\rangle$ such that $P(X_1,\dots,X_n)=0$ holds.
\end{proposition}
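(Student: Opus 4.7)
The plan is a proof by contradiction. Assume that there exists a non-zero $P\in\C\langle Z_1,\dots,Z_n\rangle$ with $P(X_1,\dots,X_n)=0$, and choose such a $P$ of minimal degree $d$. Since non-zero constants evaluate to non-zero scalar multiples of $1\in M$, we must have $d\geq 1$. The contradiction will be obtained by showing that $\partial_jP=0$ as an element of $\C\langle Z_1,\dots,Z_n\rangle^{\otimes 2}$ for every $j$, while simultaneously exhibiting some $j$ for which $\partial_jP\neq 0$.

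For the first half, the Leibniz rule applied iteratively, together with the defining condition $\hat{\partial}_j(X_i)=\delta_{j,i}\,1\otimes 1$, yields $\hat{\partial}_j(P(X))=(\partial_jP)(X)=(\ev_X\otimes\ev_X)(\partial_jP)$ for every $P\in\C\langle Z_1,\dots,Z_n\rangle$; this is precisely the commutativity of the diagram in \eqref{derivation_diagram}. Since $\hat{\partial}_j$ is linear and $P(X)=0$, we conclude $(\ev_X\otimes\ev_X)(\partial_jP)=0$ for each $j$.

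To exploit the minimality of $d$, let $F_{d-1}\subset\C\langle Z_1,\dots,Z_n\rangle$ denote the subspace of polynomials of degree at most $d-1$. By minimality, no non-zero element of $F_{d-1}$ lies in the kernel $I_X^1$ of $\ev_X$, so the restriction $\ev_X|_{F_{d-1}}$ is injective, and hence its algebraic tensor square $\ev_X\otimes\ev_X: F_{d-1}\otimes F_{d-1}\to\C\langle X_1,\dots,X_n\rangle^{\otimes 2}$ is injective as well (since over a field, tensor products of injective linear maps are injective). Because $\partial_j$ strictly lowers total degree, $\partial_jP\in F_{d-1}\otimes F_{d-1}$, and the conclusion of the previous paragraph therefore forces $\partial_jP=0$ for every $j$.

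To exhibit a $j$ with $\partial_jP\neq 0$, pick any degree-$d$ monomial $Z_{i_1}\cdots Z_{i_d}$ occurring with non-zero coefficient $c$ in $P$. A short inspection of the defining sum for $\partial_{i_1}$ shows that the coefficient of the simple tensor $1\otimes Z_{i_2}\cdots Z_{i_d}$ in $\partial_{i_1}P$ is precisely $c$: the left-hand $1$ forces the differentiated occurrence of $Z_{i_1}$ to be the leading letter, so only the monomial $Z_{i_1}Z_{i_2}\cdots Z_{i_d}$ itself contributes. Hence $\partial_{i_1}P\neq 0$, the desired contradiction. The main conceptual obstacle is to see that the well-definedness of $\hat{\partial}_j$ on $\C\langle X_1,\dots,X_n\rangle$ actually lifts the single vanishing relation $P(X)=0$ to the stronger formal-polynomial vanishing $\partial_jP=0$ in $\C\langle Z_1,\dots,Z_n\rangle^{\otimes 2}$; once this observation is combined with the minimality of $d$, the remaining ingredients (the Leibniz rule and injectivity of tensor products) are routine.
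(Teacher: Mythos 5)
Your proof is correct, but it takes a genuinely different route from the paper's. Both arguments share the same engine: the commutativity of the diagram \eqref{derivation_diagram} turns the single relation $P(X)=0$ into $(\partial_jP)(X)=0$ for every $j$, and $\partial_j$ lowers degree. From there the paths diverge. The paper does not pass to a minimal counterexample; instead it composes with a partial trace, setting $\Delta_j:=((\tau\circ\ev_X)\otimes\id)\circ\partial_j$, and iterates $d$ times along the letters of a top-degree monomial to strip it down to its coefficient $a_{i_1,\dots,i_d}$, which must then vanish --- a contradiction obtained entirely inside $\C\langle Z_1,\dots,Z_n\rangle$ by an explicit computation. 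You instead pick $P$ of minimal degree, observe that minimality makes $\ev_X$ injective on polynomials of degree at most $d-1$, invoke the (correct, over a field) fact that the tensor square of an injective linear map is injective to upgrade $(\ev_X\otimes\ev_X)(\partial_jP)=0$ to the formal identity $\partial_jP=0$, and then contradict this by reading off the coefficient of $1\otimes Z_{i_2}\cdots Z_{i_d}$ in $\partial_{i_1}P$. Your version applies the derivative only once and isolates the linear-algebraic content (injectivity of tensor products) cleanly, at the cost of the minimality device; the paper's version avoids any appeal to tensor-product injectivity and works for an arbitrary relation directly, at the cost of verifying the iterated identity $\Delta_{i_d}\cdots\Delta_{i_1}P=a_{i_1,\dots,i_d}$. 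Both are complete; it is worth noting that the paper reuses its $\Delta_j$-iteration scheme almost verbatim in the proof of Theorem \ref{main2} (with $\Delta_{p,j}$), so its formulation is the one that generalizes to the zero-divisor setting.
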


\begin{proof}
First of all, we observe that for any $P\in\C\langle Z_1,\dots,Z_n\rangle$ the following implication holds true:
$$P(X_1,\dots,X_n) = 0 \quad\Longrightarrow\quad \forall j=1,\dots,n:\ (\partial_j P)(X_1,\dots,X_n) = 0$$
This follows immediately from the assumption that the diagram \eqref{derivation_diagram} commutes. Thus, we may define $\Delta_j := ((\tau\circ\ev_X)\otimes\id) \circ \partial_j$, which is a linear mapping
$$\Delta_j:\ \C\langle Z_1,\dots,Z_n\rangle \rightarrow \C\langle Z_1,\dots,Z_n\rangle,$$
such that for any $P\in\C\langle Z_1,\dots,Z_n\rangle$ the implication
$$P(X_1,\dots,X_n) = 0 \quad\Longrightarrow\quad \forall j=1,\dots,n:\ (\Delta_j P)(X_1,\dots,X_n) = 0$$
holds true. Now, let us assume that there is a non-zero polynomial $P\in\C\langle Z_1,\dots,Z_n\rangle$ such that $P(X_1,\dots,X_n)=0$ holds, for instance
$$P(Z_1,\dots,Z_n) = a_0 + \sum_{k=1}^d \sum_{i_1,\dots,i_k = 1}^n a_{i_1,\dots,i_k} Z_{i_1} \dots Z_{i_k},$$
where $d\geq1$ denotes the total degree of $P$. We choose any summand of highest degree
$$a_{i_1,\dots,i_d} Z_{i_1} \dots Z_{i_d}$$
of $P$. It is easy to see that $\Delta_{i_d} \dots \Delta_{i_1} P = a_{i_1,\dots,i_d}$. Hence, we deduce
$$a_{i_1,\dots,i_n} = (\Delta_{i_d} \dots \Delta_{i_1} P)(X_1,\dots,X_n) = 0,$$
which finally leads to a contradiction. Therefore, we must have $P=0$, which concludes the proof.
\end{proof}

Combining the above Proposition \ref{derivations_relations} with Corollary \ref{derivation_existence_3} leads us directly to the following theorem.

\begin{theorem}\label{main1}
As before, let $(M,\tau)$ be a tracial $W^\ast$-probability space. Let $X_1,\dots,X_n\in M$ be self-adjoint and assume that there are elements $\xi_1,\dots,\xi_n\in L^2(M,\tau)$, such that $(\xi_1,\dots,\xi_n)$ satisfies the conjugate relations for $(X_1,\dots,X_n)$, i.e. \eqref{conjugate_relations} holds for $j=1,\dots,n$. Then we have the following statements:
\begin{itemize}
 \item[(a)] $X_1,\dots,X_n$ do not satisfy any non-trivial algebraic relation, i.e. there exists no non-zero polynomial $P\in\C\langle Z_1,\dots,Z_n\rangle$ such that $P(X_1,\dots,X_n)=0$.
 \item[(b)] For $j=1,\dots,n$, there is a unique derivation
$$\hat{\partial}_j:\ \C\langle X_1,\dots,X_n\rangle \rightarrow \C\langle X_1,\dots,X_n\rangle \otimes \C\langle X_1,\dots,X_n\rangle$$
which satisfies $\hat{\partial}_j(X_i) = \delta_{j,i} 1 \otimes 1$ for $i=1,\dots,n$.
\end{itemize}
\end{theorem}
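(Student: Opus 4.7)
The plan is to observe that this theorem is essentially a packaging of the two preceding results, so the proof will consist of chaining them in the right order. The hypothesis is exactly the hypothesis of Corollary \ref{derivation_existence_3}, and the conclusion of that corollary is exactly statement (b) of the theorem. So the first step is just to cite Corollary \ref{derivation_existence_3}, which produces, for each $j=1,\dots,n$, a unique derivation
$$\hat{\partial}_j\colon \C\langle X_1,\dots,X_n\rangle \to \C\langle X_1,\dots,X_n\rangle \otimes \C\langle X_1,\dots,X_n\rangle$$
with $\hat{\partial}_j(X_i) = \delta_{j,i}\, 1\otimes 1$. This settles part (b) of the theorem.

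Having these derivations in hand, I would then apply Proposition \ref{derivations_relations}, whose hypothesis is precisely the existence of derivations $\hat{\partial}_j$ on $\C\langle X_1,\dots,X_n\rangle$ with $\hat{\partial}_j(X_i) = \delta_{j,i}\, 1\otimes 1$, and whose conclusion is that no non-zero polynomial $P\in\C\langle Z_1,\dots,Z_n\rangle$ satisfies $P(X_1,\dots,X_n)=0$. This is exactly statement (a), so the proof is complete.

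Since both cited results do all the substantive work, there is no real obstacle in assembling the theorem; the only thing to take care of is to present the two applications in the correct logical order (first construct the derivations from the conjugate system via Corollary \ref{derivation_existence_3}, then feed them into Proposition \ref{derivations_relations}) rather than treating (a) as a prerequisite for (b). I would keep the write-up extremely short, essentially a one-paragraph invocation of the two earlier results, since restating their proofs would be redundant.
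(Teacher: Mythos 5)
Your proposal is correct and matches the paper exactly: the authors also obtain Theorem \ref{main1} by combining Corollary \ref{derivation_existence_3} (giving part (b)) with Proposition \ref{derivations_relations} (giving part (a)), in that order. No further comment is needed.
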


Since Theorem \ref{main1} clarifies which consequences the existence of the conjugate system has, we may proceed now by defining (non-microstates) free Fisher information.

\begin{definition}
Let $(M,\tau)$ be a tracial $W^\ast$-probability space and let self-adjoint elements $X_1,\dots,X_n \in M$ be given. We define their \emph{(non-microstates) free Fisher information $\Phi^\ast(X_1,\dots,X_n)$} by
$$\Phi^\ast(X_,\dots,X_n) := \sum^n_{j=1} \|\xi_j\|_2^2$$
if a conjugate system $(\xi_1,\dots,\xi_n)$ for $(X_1,\dots,X_n)$ in the sense of Definition \ref{conjugate_system} exists, and we put $\Phi^\ast(X_1,\dots,X_n) := \infty$ if no such conjugate system for $(X_1,\dots,X_n)$ exists.
\end{definition}

The quantity $\Phi^\ast(X_1,\dots,X_n)$ is obviously well-defined, since as soon as a conjugate system in the sense of Definition \ref{conjugate_system} exists, Theorem \ref{main1} implies that it is also a conjugate system in the usual sense. Thus, $\Phi^\ast(X_1,\dots,X_n)$ is just the usual non-microstates free Fisher information which was defined in \cite{Voi-Entropy-V}.

Nevertheless, it has the advantage that it can be defined even without assuming the algebraic freeness of $X_1,\dots,X_n$ right from the beginning.

Let $(M,\tau)$ be a $W^\ast$-probability space and let self-adjoint elements $X_1,\dots,X_n\in M$ be given such that the condition $\Phi^\ast(X_1,\dots,X_n) < \infty$ is fulfilled. Theorem \ref{main1} tells us that, for $j=1,\dots,n$, there is a unique derivation
$$\hat{\partial}_j:\ \C\langle X_1,\dots,X_n\rangle \rightarrow \C\langle X_1,\dots,X_n\rangle \otimes \C\langle X_1,\dots,X_n\rangle$$
that satisfies $\hat{\partial}_j(X_i) = \delta_{j,i} 1 \otimes 1$ for $i=1,\dots,n$. But furthermore, it tells us that $X_1,\dots,X_n$ do not satisfy any algebraic relation, which in other words means that the evaluation homomorphism $\ev_X$ induces an isomorphism between the abstract polynomial algebra $\C\langle Z_1,\dots, Z_n\rangle$ and the subalgebra $\C\langle X_1,\dots,X_n\rangle$ of $M$. We have seen in Corollary \ref{derivation_existence_2} that under this identification the derivations $\partial_j$ on $\C\langle Z_1,\dots, Z_n\rangle$ and $\hat{\partial_j}$ on $\C\langle X_1,\dots,X_n\rangle$ correspond to each other. We will therefore not distinguish anymore between $\partial_j$ and $\hat{\partial}_j$.

We finish this section by noting that $\Phi^\ast(X_1,\dots,X_n)<\infty$ moreover excludes analytic relations. More precisely, this means that there is no non-zero non-commutative power series $P$, which is convergent on a polydisc
$$D_R:=\{(Y_1,\dots,Y_n)\in M^n|\ \forall j=1,\dots,n:\ \|Y_j\| < R\}$$
for some $R>0$, such that $(X_1,\dots,X_n)\in D_R$ and $P(X_1,\dots,X_n)=0$. Based on Voiculescu's original definition of the non-microstates free Fisher information and hence under the additional assumption that $X_1,\dots,X_n$ are algebraically free, this was shown by Dabrowski in \cite[Lemma 37]{Dab-free_stochastic_PDE}.

\section{Finite free Fisher information and zero divisors}

Inspired by the methods used in the proof of our Theorem \ref{main1}, we address now the more general question of existence of zero divisors under the assumption of finite non-microstates free Fisher information.

First, we shall make more precise, what we mean by this. Our aim is to prove the following statement.

\begin{theorem}\label{main2}
Let $(M,\tau)$ be a tracial $W^\ast$-probability space. Furthermore, let $X_1,\dots,X_n \in M$ be self-adjoint elements and assume that $\Phi^\ast(X_1,\dots,X_n) < \infty$ holds.

Then, for any non-zero non-commutative polynomial $P$, there exists no non-zero self-adjoint element $w\in\vN(X_1,\dots,X_n)$ such that
$$P(X_1,\dots,X_n) w = 0.$$
\end{theorem}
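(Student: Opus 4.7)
The plan is to proceed by induction on the total degree $d := \deg P$, mimicking the stripping argument from the proof of Proposition \ref{derivations_relations} but with the projection $e$ playing the role of a ``weight'' on the right. First I would reduce to the case that $w$ is a non-zero projection $e \in \vN(X_1,\dots,X_n)$: the right annihilator of $P(X_1,\dots,X_n)$ inside $\vN(X_1,\dots,X_n)$ is a right ideal of the form $e\,\vN(X_1,\dots,X_n)$, where $e := 1 - r$ with $r$ the right support projection of $P(X_1,\dots,X_n)$; the presence of a non-zero $w$ in that annihilator forces $e \neq 0$. Replacing $P$ by $P^\ast P$ — which is again a non-zero polynomial, because $\C\langle Z_1,\dots,Z_n\rangle$ is a domain — we may further assume that $P$ is self-adjoint as a polynomial, so that $P(X_1,\dots,X_n)$ is self-adjoint and hence $e\,P(X_1,\dots,X_n) = 0$ as well. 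The base case $d = 0$ is immediate: $P$ is a non-zero scalar $c$, and $c\,e = 0$ forces $e = 0$.

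For the inductive step with $d \geq 1$ the goal is to manufacture a non-zero polynomial $\widetilde{P}$ of degree strictly less than $d$ satisfying $\widetilde{P}(X_1,\dots,X_n)\,e = 0$, contradicting the inductive hypothesis applied to $\widetilde P$ and $e$. The decisive analytic ingredient provided by $\Phi^\ast(X_1,\dots,X_n) < \infty$ is Voiculescu's closability statement from \cite{Voi-Entropy-V}: each derivation $\partial_j$ extends to a closable unbounded operator from $L^2(\vN(X_1,\dots,X_n),\tau)$ into the Hilbert tensor product $L^2 \otimes L^2$ with $\xi_j = \partial_j^\ast(1 \otimes 1)$. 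Granting that $e$ lies in the domain of the closure $\overline{\partial_j}$ (which is the first analytic point I would have to justify), the extended Leibniz rule applied to $P(X)\,e = 0$ yields
\begin{equation*}
0 \;=\; \overline{\partial_j}\!\bigl(P(X)\,e\bigr) \;=\; (\partial_j P)(X)\,(1 \otimes e) \;+\; (P(X)\otimes 1)\,\overline{\partial_j}(e),
\end{equation*}
and left-multiplication by $(e \otimes 1)$ combined with $e\,P(X) = 0$ produces the clean identity
\begin{equation*}
(e \otimes 1)\,(\partial_j P)(X)\,(1 \otimes e) \;=\; 0 \qquad \text{in } L^2(M,\tau) \otimes L^2(M,\tau)
\end{equation*}
for each $j = 1,\dots,n$, where I write $X = (X_1,\dots,X_n)$ for brevity.

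Writing $(\partial_j P)(X) = \sum_l A_l(X) \otimes B_l(X)$ as a finite sum of tensors of polynomials in $X$ of total degree at most $d - 1$, the preceding identity reads $\sum_l e\,A_l(X) \otimes B_l(X)\,e = 0$, and applying $\tau \otimes \id$ produces the polynomial $Q_j(Z) := \sum_l \tau\!\bigl(e\,A_l(X)\bigr)\,B_l(Z) \in \C\langle Z_1,\dots,Z_n\rangle$ of degree $< d$ satisfying $Q_j(X)\,e = 0$. By Theorem \ref{main1}, $Q_j \neq 0$ as a polynomial is equivalent to $Q_j(X) \neq 0$, so as soon as some $Q_j$ is non-zero the inductive hypothesis applied to $Q_j$ and $e$ forces $e = 0$, completing the contradiction.

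I expect the genuinely hard step to be the guarantee that at least one of the $Q_j$'s is non-zero as a polynomial, since a priori the scalar coefficients $\tau\!\bigl(e\,A_l(X)\bigr)$ might conspire to vanish simultaneously for every $j$. I would attempt to overcome this by choosing $j$ so as to isolate the contribution of a monomial $Z_{i_1}\cdots Z_{i_d}$ of maximal degree appearing in $P$ with non-zero scalar coefficient — in the spirit of the iterated application of the operators $\Delta_j$ in the proof of Proposition \ref{derivations_relations} — and, if necessary, iterating the same compression trick through the higher-order derivatives $\partial_{i_k}\cdots\partial_{i_1}P$ paired with the compressions $(e \otimes 1 \otimes \cdots \otimes 1 \otimes e)$ of the resulting tensors in $\C\langle X_1,\dots,X_n\rangle^{\otimes(k+1)}$. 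Pushing this down to $k = d$ should strip the polynomial all the way to its leading scalar coefficient times $e \otimes \cdots \otimes e$, which is non-zero as soon as $e$ and the chosen leading coefficient are non-zero, yielding the contradiction directly.
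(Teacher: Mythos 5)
Your overall strategy -- compress $\partial_j P$ between kernel projections, apply $\tau\otimes\id$ to drop the degree, and iterate down to the leading coefficient -- is exactly the shape of the paper's argument (your $Q_j$ is the paper's $\Delta_{p,j}P$). However, there are two genuine gaps, and they sit precisely at the two places the paper has to work hardest.

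First, the step you defer as ``the first analytic point I would have to justify'' -- that $e$ lies in $D(\overline{\partial_j})$ and that the Leibniz rule extends to $\overline{\partial_j}(P(X)e)$ -- is not a justifiable assumption; nothing in the hypothesis $\Phi^*<\infty$ puts a spectral projection of $P(X)$, or indeed a general element of $\vN(X_1,\dots,X_n)$, into the domain of the closure of $\partial_j$. The paper's Proposition \ref{reduction-strong} is engineered specifically to avoid this: it approximates $u,v$ by Kaplansky sequences $u_k,v_k$ of polynomials with uniformly bounded operator norm, and then uses the Dabrowski-type estimates $\|\partial_i^*(Y_1\otimes Y_2)\|_2\le 3\|\xi_i\|_2\|Y_1\|\|Y_2\|$ and $\|(\id\otimes\tau)((\partial_i Y_1)(1\otimes Y_2))\|_2\le 4\|\xi_i\|_2\|Y_1\|\|Y_2\|$ to show that every term involving the uncontrollable quantity $\partial_i u_k$ is paired against something tending to $0$ in $L^2$ (namely $Pu_k=P(u_k-u)$ or $P^*v_k Q_1=P^*(v_k-v)Q_1$), so that the products vanish in the limit. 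This approximation-plus-norm-estimate argument is the technical core of the proof and cannot be replaced by a formal chain rule for $\overline{\partial_j}$.

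Second, the non-vanishing/iteration issue you flag is real and your proposed fixes do not close it. Your symmetrization $P\mapsto P^*P$ gives the two-sided condition $eP(X)=P(X)e=0$ at the first step, but it doubles the degree, so the polynomial $Q_j$ you produce has degree up to $2d-1$ and the induction on $d$ does not go through; and re-symmetrizing $Q_j\mapsto Q_j^*Q_j$ at each stage makes the degree grow without bound. Your alternative of compressing higher derivatives by $(e\otimes 1\otimes\cdots\otimes 1\otimes e)$ with the \emph{same} $e$ also fails, because after one step the polynomial $Q_j$ is no longer self-adjoint and there is no reason that $Q_j(X)^*e=0$, which is what a second application of the compression identity would require. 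The paper resolves exactly this with Lemma \ref{key-lemma}: traciality gives $\tau(p_{\ker x})=\tau(p_{\ker x^*})$, so from $Q_j(X)e=0$ with $e\neq 0$ one obtains a \emph{fresh} non-zero projection $p$ with $Q_j(X)^*p=0$, and the iteration $\Delta_{p_d,i_d}\cdots\Delta_{p_1,i_1}P=\tau(p_d)\cdots\tau(p_1)a_{i_1,\dots,i_d}$ terminates in a non-zero scalar because $\tau$ is faithful on non-zero projections. Without this traciality input (or some substitute for it), your iteration does not reach the leading coefficient.
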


Recall that to each element $X=X^\ast\in M$, there corresponds a unique probability measure $\mu_X$ on the real line $\R$, which has the same moments as $X$, i.e. it satisfies 
$$\tau(X^k) = \int_\R t^k\, d\mu_X(t) \qquad\text{for $k=0,1,2,\dots$}.$$
It is an immediate consequence of Theorem \ref{main2} that the distribution $\mu_{P(X_1,\dots,X_n)}$ of $P(X_1,\dots,X_n)$ for any non-constant self-adjoint polynomial $P$ cannot have atoms, if $\Phi^\ast(X_1,\dots,X_n) < \infty$. The precise statement reads as follows. 

\begin{corollary}
Let $(M,\tau)$ be a tracial $W^\ast$-probability space and let $X_1,\dots,X_n \in M$ be self-adjoint with $\Phi^\ast(X_1,\dots,X_n) < \infty$. Then, for any non-constant self-adjoint non-commutative polynomial $P$, the distribution $\mu_{P(X_1,\dots,X_n)}$ of $P(X_1,\dots,X_n)$ does not have atoms.
\end{corollary}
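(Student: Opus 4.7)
The plan is to argue by contradiction, reducing the statement to Theorem \ref{main2} via the spectral theorem. Suppose that $\mu_{P(X_1,\dots,X_n)}$ does have an atom at some $\lambda\in\R$. Since $P$ is self-adjoint, the element $Y:=P(X_1,\dots,X_n)$ is self-adjoint in $M$, and its spectral projections lie inside $\vN(Y)\subseteq \vN(X_1,\dots,X_n)$. I would set $p:=\mathbf{1}_{\{\lambda\}}(Y)$; the fact that $\lambda$ is an atom means precisely that $\tau(p)=\mu_Y(\{\lambda\})>0$, so $p$ is a non-zero self-adjoint projection in $\vN(X_1,\dots,X_n)$, and by functional calculus $(Y-\lambda\mathbf{1})p=0$.

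Next I would lift this relation to the polynomial level. Consider $Q(Z_1,\dots,Z_n):=P(Z_1,\dots,Z_n)-\lambda\mathbf{1}\in \C\langle Z_1,\dots,Z_n\rangle$. Because $P$ is non-constant, subtracting the scalar $\lambda$ does not affect its positive-degree terms, so $Q$ is a non-zero non-commutative polynomial. Evaluation gives $Q(X_1,\dots,X_n)=Y-\lambda\mathbf{1}$, and the spectral identity above reads $Q(X_1,\dots,X_n)\,p=0$. Since $w:=p$ is a non-zero self-adjoint element of $\vN(X_1,\dots,X_n)$, Theorem \ref{main2} applied to the non-zero polynomial $Q$ and the witness $w$ is violated, yielding the desired contradiction.

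There is essentially no substantive obstacle beyond Theorem \ref{main2} itself; the only points to verify carefully are that $Q\neq 0$ (which uses \emph{non-constancy} of $P$, not merely non-vanishing) and that the spectral projection $p$ genuinely belongs to $\vN(X_1,\dots,X_n)$, which is automatic because $Y$ already lies in this von Neumann algebra and spectral projections are computed inside $\vN(Y)$.
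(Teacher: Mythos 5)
Your argument is correct and is exactly the intended deduction: the paper presents this corollary as an immediate consequence of Theorem \ref{main2}, obtained by applying that theorem to the non-zero polynomial $P-\lambda$ and the non-zero spectral projection $p=\mathbf{1}_{\{\lambda\}}(P(X_1,\dots,X_n))\in\vN(X_1,\dots,X_n)$. Nothing is missing.
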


\subsection{Collecting the ingredients for the proof of Theorem \ref{main2}}

Corresponding to the assumptions made in Theorem \ref{main2}, let $(M,\tau)$ be a tracial $W^\ast$-probability space and let $X_1,\dots,X_n \in M$ be self-adjoint with $\Phi^\ast(X_1,\dots,X_n) < \infty$.

As we have seen in Section \ref{algebraic_relations}, those conditions guarantee that, for $j=1,\dots,n$, there exists a unique derivation
$$\partial_j:\ \C\langle X_1,\dots,X_n\rangle \rightarrow \C\langle X_1,\dots,X_n\rangle \otimes \C\langle X_1,\dots,X_n\rangle,$$
which is determined by the condition $\partial_j X_i = \delta_{i,j} 1 \otimes 1$ for $i=1,\dots,n$. For each $j=1,\dots,n$, we may consider $\partial_j$ as a densely defined unbounded operator
$$\partial_j:\ L^2(M,\tau) \supseteq D(\partial_j) \rightarrow L^2(M \overline{\otimes} M, \tau \otimes \tau)$$
with domain $D(\partial_j) := \C\langle X_1,\dots,X_n\rangle$, where we denote by $M \overline{\otimes} M$ the von Neumann algebra tensor product of $M$ with itself.

Since due to $\Phi^\ast(X_1,\dots,X_n) < \infty$ a conjugate system $(\xi_1,\dots,\xi_n)$ for $(X_1,\dots,X_n)$ exists, we see by \eqref{conjugate_relations} that $1 \otimes 1$ belongs to the domain of definition of the adjoints $\partial_1^\ast,\dots,\partial_n^\ast$ and that $\xi_j = \partial_j^\ast(1 \otimes 1)$ holds for $j=1,\dots,n$.

The proof of Theorem \ref{main2} will be based on several well-knowns facts about those operators $\partial_j$, which we collect here for reader's convenience.

\begin{lemma}[Corollary 4.2 and Proposition 4.3 in \cite{Voi-Entropy-V}]\label{Voi-formula}
Under the above conditions, we have for $j=1,\dots,n$ that
$$\C\langle X_1,\dots,X_n\rangle \otimes \C\langle X_1,\dots,X_n\rangle \subseteq D(\partial_j^\ast),$$
i.e. $\partial_j^\ast$ is densely defined as well and $\partial_j$ is closable. More explicitly, we have for each $Y\in \C\langle X_1,\dots,X_n\rangle^{\otimes 2}$ the formula
$$\partial_j^\ast(Y) = m_{\xi_j}(Y) - m_1(\id\otimes\tau\otimes\id)(\partial_j\otimes\id+\id\otimes\partial_j)(Y).$$
\end{lemma}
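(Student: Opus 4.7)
The plan is to verify the identity $\langle \partial_j P, Y\rangle_{\tau\otimes\tau} = \langle P, \eta(Y)\rangle_\tau$ directly on the algebraic tensor product, where $\eta(Y)$ denotes the right-hand side of the stated formula. Once this is established for every $P\in\C\langle X_1,\dots,X_n\rangle$ and every $Y\in \C\langle X_1,\dots,X_n\rangle^{\otimes 2}$, it is exactly the defining property of the adjoint on a dense subspace, so it simultaneously yields the claimed inclusion $\C\langle X_1,\dots,X_n\rangle^{\otimes 2}\subseteq D(\partial_j^\ast)$ and the explicit formula for $\partial_j^\ast$. Closability of $\partial_j$ is then a free corollary: a densely defined operator whose adjoint is densely defined is always closable.

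By bilinearity it suffices to treat elementary tensors $Y=A\otimes B$. The computational heart of the argument is to apply the Leibniz rule to the product $\partial_j(A^\ast P B^\ast)$, producing three summands: one containing $\partial_j A^\ast$, a middle piece of the form $(A^\ast\otimes 1)(\partial_j P)(1\otimes B^\ast)$, and one containing $\partial_j B^\ast$. Applying $\tau\otimes\tau$ makes the left-hand side collapse to $\tau(\xi_j A^\ast P B^\ast)$ by the conjugate relation \eqref{conjugate_relations}, while the middle summand becomes $\langle \partial_j P, A\otimes B\rangle$ after a traciality rearrangement. The two outer summands reduce to expressions of the form $\tau(P\cdot\,\text{something})$, where the ``something'' is built out of $(\tau\otimes\id)(\partial_j A^\ast)$ and $(\id\otimes\tau)(\partial_j B^\ast)$. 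Solving for the middle summand and applying $^\ast$ under the trace recasts everything as $\langle P,\eta(A\otimes B)\rangle$.

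To match this $\eta$ with the stated form $m_{\xi_j}(Y)-m_1(\id\otimes\tau\otimes\id)(\partial_j\otimes\id+\id\otimes\partial_j)(Y)$, two auxiliary identities are needed, both proved by a straightforward induction on the degree of the polynomial: self-adjointness of each conjugate variable $\xi_j$ (obtained by comparing the conjugate relation applied to $P$ and to $P^\ast$, using self-adjointness of the $X_i$ and the assumption $\xi_j\in L^2(X_1,\dots,X_n,\tau)$), and the intertwining rule $\partial_j(P^\ast)=(\partial_j P)^{\dagger}$, where $^{\dagger}$ denotes the composition of the flip on $M\otimes M$ with entrywise adjoint. These identities convert $(\tau\otimes\id)(\partial_j A^\ast)$ into $[(\id\otimes\tau)(\partial_j A)]^\ast$ and similarly on the other factor, at which point the expression for $\eta$ coincides with the one in the lemma.

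The main obstacle is purely bookkeeping: carefully tracking how $^\ast$ interacts with the flip on $M\otimes M$ and which leg of the triple tensor $\tau$ is applied to at each step. No analytic subtlety enters, since every identity is tested on polynomials and every element of the algebraic tensor product is a finite sum of elementary tensors; extension of $\partial_j^\ast$ beyond $\C\langle X_1,\dots,X_n\rangle^{\otimes 2}$ is not claimed and not needed.
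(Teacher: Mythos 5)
Your argument is correct and is essentially the standard one: the paper itself does not prove this lemma but imports it verbatim from \cite{Voi-Entropy-V}, and your integration-by-parts computation on $\partial_j(A^\ast P B^\ast)$, together with the two auxiliary identities you isolate (self-adjointness of $\xi_j$, which indeed requires $\xi_j\in L^2(X_1,\dots,X_n,\tau)$ and not merely the conjugate relations, and $\partial_j(P^\ast)=\sigma((\partial_j P)^\ast)$, which is \eqref{duality} in the paper's notation), is exactly Voiculescu's original proof of Corollary 4.2 and Proposition 4.3 there. The deduction of closability from dense definedness of the adjoint is likewise the intended one, so there is nothing to add.
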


Here, for any $\eta\in L^2(M,\tau)$, we denote by $m_\eta$ the linear operator $m_\eta: M\otimes M \rightarrow L^2(M,\tau)$ defined on the algebraic tensor product $M\otimes M$, which is given by $m_\eta(a_1 \otimes a_2) := a_1\eta a_2$. And thus of course, $m_1(a_1\otimes a_2)=a_1a_2$.\

The lemma above allows us to conclude that in particular
\begin{equation}\label{conjugate_formulas}
\begin{aligned}
\partial_j^\ast(P\otimes 1) &= P \xi_j - (\id\otimes\tau)(\partial_j P),\\
\partial_j^\ast(1\otimes P) &= \xi_j P - (\tau\otimes\id)(\partial_j P)
\end{aligned}
\end{equation}
holds for $j=1,\dots,n$ and any $P\in\C\langle X_1,\dots,X_n\rangle$.

\begin{lemma}[Lemma 12 in \cite{Dab-Gamma}]\label{Dab-estimates}
Under the above conditions, we have for each $P\in\C\langle X_1,\dots,X_n\rangle$ that
\begin{equation}\label{Dab-estimates-1}
\begin{aligned}
\|P \xi_j - (\id\otimes\tau)(\partial_j P)\|_2 &\leq \|\xi_j\|_2 \|P\|,\\
\|\xi_j P - (\tau\otimes\id)(\partial_j P)\|_2 &\leq \|\xi_j\|_2 \|P\|
\end{aligned}
\end{equation}
and therefore
\begin{equation}\label{Dab-estimates-2}
\begin{aligned}
\|(\id\otimes\tau)(\partial_j P)\|_2 &\leq 2\|\xi_j\|_2 \|P\|,\\
\|(\tau\otimes\id)(\partial_j P)\|_2 &\leq 2\|\xi_j\|_2 \|P\|.
\end{aligned}
\end{equation}
\end{lemma}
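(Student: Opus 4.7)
The plan is to identify the left-hand sides in \eqref{Dab-estimates-1} as $L^2$-norms of $\partial_j^\ast$ applied to simple tensors, to establish the resulting $L^2$-bounds by exploiting the bimodule structure of $L^2(M\overline{\otimes}M)$, and finally to derive \eqref{Dab-estimates-2} from \eqref{Dab-estimates-1} via the triangle inequality. Applying the formula from Lemma \ref{Voi-formula} to $Y = P\otimes 1$ and to $Y = 1\otimes P$ immediately gives
\[
P\xi_j - (\id\otimes\tau)(\partial_j P) = \partial_j^\ast(P\otimes 1), \qquad \xi_j P - (\tau\otimes\id)(\partial_j P) = \partial_j^\ast(1\otimes P).
\]
Since $X_1,\dots,X_n$ are self-adjoint, $\xi_j$ is self-adjoint in $L^2(M,\tau)$, and a direct unfolding of the conjugate relations combined with the flip-symmetry $\partial_j(P^\ast) = \sigma\bigl((\partial_j P)^\ast\bigr)$ shows that the $L^2$-involution carries $\partial_j^\ast(P\otimes 1)$ to $\partial_j^\ast(1\otimes P^\ast)$. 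As the involution is an $L^2$-isometry, the two inequalities in \eqref{Dab-estimates-1} are equivalent, so it suffices to establish $\|\partial_j^\ast(P\otimes 1)\|_2 \leq \|P\|\|\xi_j\|_2$.

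For this main estimate, the crucial fact is that $P\otimes 1$, viewed as an element of $M\overline{\otimes}M^{\operatorname{op}}$, acts as a bounded left multiplier on $L^2(M\overline{\otimes}M)$ with operator norm at most $\|P\|$. Writing $P\otimes 1 = (P\otimes 1)\cdot(1\otimes 1)$ and using that $\partial_j^\ast(1\otimes 1) = \xi_j$, the aim is to show that $\partial_j^\ast$ intertwines this bimodule left action with left multiplication by $P$ on $L^2(M,\tau)$ up to a controlled correction. Concretely, one must show that the correction term $(\id\otimes\tau)(\partial_j P)$ cancels against $P\xi_j$ in such a way that the difference has $L^2$-norm at most $\|P\|\|\xi_j\|_2$, which is strictly better than the trivial upper bound $\|P\|\|\xi_j\|_2 + \|(\id\otimes\tau)(\partial_j P)\|_2$ coming from the triangle inequality. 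This sharp cancellation is the main obstacle: it reflects the partial $M$-bimodule linearity of $\partial_j^\ast$ and can be extracted either by a careful direct computation using the derivation property of $\partial_j$ together with the conjugate relation $(\tau\otimes\tau)(\partial_j R) = \tau(\xi_j R)$, or by realizing $\partial_j^\ast$ via a Voiculescu-type free Fock space construction in which left multiplication by $P$ genuinely appears as a bounded operator of norm $\|P\|$.

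Finally, \eqref{Dab-estimates-2} follows at once from \eqref{Dab-estimates-1}: rewriting the defining identities as $(\id\otimes\tau)(\partial_j P) = P\xi_j - \partial_j^\ast(P\otimes 1)$ and $(\tau\otimes\id)(\partial_j P) = \xi_j P - \partial_j^\ast(1\otimes P)$, the triangle inequality together with the operator-norm bounds $\|P\xi_j\|_2,\|\xi_j P\|_2 \leq \|P\|\|\xi_j\|_2$ and \eqref{Dab-estimates-1} yields \eqref{Dab-estimates-2} with the factor $2$ in front.
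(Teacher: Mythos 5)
Your reduction of the lemma to the single estimate $\|\partial_j^\ast(P\otimes 1)\|_2\leq\|P\|\,\|\xi_j\|_2$ is sound and matches the paper's treatment: the identification of the left-hand sides of \eqref{Dab-estimates-1} with $\partial_j^\ast(P\otimes 1)$ and $\partial_j^\ast(1\otimes P)$ via Lemma \ref{Voi-formula} is exactly \eqref{conjugate_formulas}; your flip-symmetry observation that the $L^2$-involution sends $\partial_j^\ast(P\otimes 1)$ to $\partial_j^\ast(1\otimes P^\ast)$ is precisely the identity \eqref{duality} which the paper uses to pass from the first to the second inequality in each display; and the triangle-inequality derivation of \eqref{Dab-estimates-2} is the same as the paper's. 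Note, however, that the paper does not prove the first inequality of \eqref{Dab-estimates-1} at all: it imports it verbatim from Lemma 12 of \cite{Dab-Gamma} and only supplies the ``slight extension'' consisting of the adjoint argument and the factor-$2$ consequence.

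The genuine gap is that you do not prove the core estimate either. You correctly isolate it, call it ``the main obstacle,'' and then assert that it ``can be extracted either by a careful direct computation \dots or by realizing $\partial_j^\ast$ via a Voiculescu-type free Fock space construction,'' without carrying out either. Neither route is routine. A direct computation from the derivation property and the conjugate relations gives you $\partial_j^\ast(P\otimes 1)=P\xi_j-(\id\otimes\tau)(\partial_j P)$, but the naive triangle inequality then requires a bound on $\|(\id\otimes\tau)(\partial_j P)\|_2$, which is circular since that is part of what is being proved; the known argument (Dabrowski's) instead establishes a module-type identity of the form $\|\partial_j^\ast(P\otimes 1)\|_2^2\leq\|\xi_j\|_2\,\|\partial_j^\ast(P^\ast P\otimes 1)\|_2$ and iterates it over the powers $(P^\ast P)^{2^k}$, using that $\|\partial_j^\ast(R\otimes 1)\|_2$ grows at most polynomially in the degree of $R$, so that the $2^{-k}$-th roots converge to $\|P^\ast P\|=\|P\|^2$. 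This nontrivial bootstrapping is the actual content of the lemma and is absent from your proposal. The Fock-space alternative is also not available here: for general $X_1,\dots,X_n$ with finite free Fisher information there is no semicircular Fock model in which $\partial_j^\ast$ is realized so that left multiplication by $P$ acts with norm $\|P\|$. As written, your argument proves only the equivalence of the two inequalities in \eqref{Dab-estimates-1} and the implication \eqref{Dab-estimates-1} $\Rightarrow$ \eqref{Dab-estimates-2}, not the estimates themselves.
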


Note that Lemma \ref{Dab-estimates} is actually a slight extension of Lemma 12 in \cite{Dab-Gamma}, since we added in \eqref{Dab-estimates-1} and \eqref{Dab-estimates-2} each time the second named estimates. In fact, they can be easily deduced from the first named estimates by using the more general identity
\begin{equation}\label{duality}
(\tau\otimes\id)(P_1 (\partial_i P_2))^\ast = (\id\otimes\tau)((\partial_i P_2^\ast) P_1^\ast)
\end{equation}
for all $P_1,P_2\in\C\langle X_1,\dots,X_n\rangle$, which can itself easily be checked on monomials.

Moreover, we note that thanks to \eqref{conjugate_formulas}, the inequalities in \eqref{Dab-estimates-1} can be rewritten in the following way:
\begin{equation}\label{conjugate_norms}
\begin{aligned}
\|\partial_j^\ast(P \otimes 1)\|_2 &\leq \|\xi_j\|_2 \|P\|\\
\|\partial_j^\ast(1 \otimes P)\|_2 &\leq \|\xi_j\|_2 \|P\|
\end{aligned}
\end{equation}

\subsection{Proof of Theorem \ref{main2}}

Inspired by the proof of Theorem \ref{main1}, we try to find a certain reduction argument that allows us to lower the degree of $P$. This will need some preparation.
 
\begin{lemma}\label{approximation}
For any $w=w^\ast\in \vN(X_1,\dots,X_n)$, there exists a sequence $(w_k)_{k\in\N}$ of elements in $\C\langle X_1,\dots,X_n\rangle$ such that:
\begin{itemize}
 \item[(i)] $w_k = w_k^\ast$
 \item[(ii)] $\displaystyle{\sup_{k\in\N} \|w_k\| < \infty}$
 \item[(iii)] $\|w_k-w\|_2 \rightarrow 0$ for $k\rightarrow\infty$
\end{itemize}
\end{lemma}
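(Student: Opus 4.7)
This is a standard Kaplansky density argument; the essential content is that bounded self-adjoint polynomial approximations exist in the strong operator topology (SOT), and that SOT-convergence applied to the cyclic vector $1 \in L^2(M,\tau)$ automatically upgrades to $\|\cdot\|_2$-convergence.

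First, I would invoke Kaplansky's density theorem in its self-adjoint form: since $\C\langle X_1,\dots,X_n\rangle$ is a unital $\ast$-subalgebra of $\vN(X_1,\dots,X_n)$ whose SOT-closure equals $\vN(X_1,\dots,X_n)$ by definition, the self-adjoint part of the operator-norm ball of radius $\|w\|$ in $\C\langle X_1,\dots,X_n\rangle$ is SOT-dense in the corresponding self-adjoint ball of $\vN(X_1,\dots,X_n)$. Applied to the given $w = w^*$ (assuming $w\neq 0$; otherwise simply take $w_k := 0$), this yields a net $(v_\alpha)$ of self-adjoint polynomials in $X_1,\dots,X_n$ with $\|v_\alpha\| \leq \|w\|$ and $v_\alpha \to w$ in SOT.

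Next, I would pass from this net to a sequence by testing at the cyclic vector $1 \in L^2(M,\tau)$: SOT-convergence gives $\|(v_\alpha - w)\cdot 1\|_2 \to 0$, i.e.\ $\|v_\alpha - w\|_2 \to 0$ along the net. For each $k \in \N$ I choose an index $\alpha_k$ with $\|v_{\alpha_k} - w\|_2 < 1/k$ and set $w_k := v_{\alpha_k}$. Then $w_k = w_k^*$, $\|w_k\| \leq \|w\|$, and $\|w_k - w\|_2 \to 0$ all hold by construction, giving (i), (ii), (iii) simultaneously.

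I do not foresee any real obstacle here; the proof is a direct combination of von Neumann's bicommutant theorem, Kaplansky's density theorem in its self-adjoint form, and the observation that the canonical embedding $M \hookrightarrow L^2(M,\tau)$, $m \mapsto m \cdot 1$, transports SOT-convergence of a bounded net to $\|\cdot\|_2$-convergence when tested at the cyclic vector $1$. The only minor point is that Kaplansky \emph{a priori} produces only a net, but the extraction of a sequence is painless since convergence has been transferred to the Hilbert space $L^2(M,\tau)$.
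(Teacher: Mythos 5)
Your proposal is correct and follows essentially the same route as the paper: both rest on Kaplansky's density theorem and then upgrade strong convergence to $\|\cdot\|_2$-convergence. The only cosmetic differences are that you invoke the self-adjoint form of Kaplansky directly (the paper instead takes real parts $\tfrac{1}{2}(w_k+w_k^\ast)$ afterwards) and that you obtain (iii) by evaluating at the cyclic vector $1\in L^2(M,\tau)$, whereas the paper reaches the same conclusion by a slightly longer computation with weak operator limits of the products $w_k^\ast w_k$, $w_k^\ast w$, $w^\ast w_k$.
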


\begin{proof}
First of all, we note that in order to prove the statement of the lemma above, it suffices to prove existence of a sequence $(w_k)_{k\in\N}$ of elements in $\C\langle X_1,\dots,X_n\rangle$, which satisfy only conditions (ii) and (iii). Indeed, if we replace in this case $w_k$ by its real part $\Re(w_k)=\frac{1}{2}(w_k+w_k^\ast)$, conditions (ii) and (iii) are still valid, but we have achieved condition (i) in addition.

For proving existence under these weaker conditions, we may apply Kaplansky's density theorem. This guarantees the existence of a sequence $(w_k)_{k\in\N}$ of elements in $\C\langle X_1,\dots,X_n\rangle$, such that $\|w_k\| \leq \|w\|$ for all $k\in\N$, which particularly implies (ii), and which converges to $w$ in the strong operator topology. It remains to note that, with respect to the weak operator topology, $w_k^\ast w \rightarrow w^\ast w$, $w^\ast w_k \rightarrow w^\ast w$, and $w_k^\ast w_k \rightarrow w^\ast w$ as $k\rightarrow\infty$, such that according to the continuity of $\tau$
\begin{align*}
\|w_k-w\|_2^2 &= \tau((w_k-w)^\ast(w_k-w))\\
              &= \tau(w_k^\ast w_k) - \tau(w_k^\ast w) - \tau(w^\ast w_k) + \tau(w^\ast w)
\end{align*}
tends to $0$ as $k\rightarrow 0$, which shows (iii) and thus concludes the proof.
\end{proof}

A key observation for the absence of zero divisors is contained in the following proposition.

\begin{proposition}\label{reduction-strong}
Let $P\in\C\langle X_1,\dots,X_n\rangle$ be given. For any $u=u^\ast,v=v^\ast\in \vN(X_1,\dots,X_n)$, the following implication holds true:
$$P u = 0 \quad\text{and}\quad P^\ast v = 0 \implies \forall i=1,\dots,n:\ v\otimes 1 (\partial_i P) 1 \otimes u  = 0.$$
\end{proposition}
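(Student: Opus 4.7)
The argument is a limiting version of an elementary Leibniz calculation. If $v$ and $u$ were polynomials in $X_1,\dots,X_n$, one could differentiate $Pu=0$ to obtain $(\partial_i P)(1\otimes u)=-(P\otimes 1)(\partial_i u)$, and left-multiplication by $v\otimes 1$ would annihilate this because $vP=(P^*v)^*=0$ (using $v=v^*$). My plan is to execute this calculation in $L^2$ by approximating $v$ and $u$ with polynomials from Lemma \ref{approximation}, used in an asymmetric way.

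\textbf{Algebraic step.} Let $v_k\in\C\langle X_1,\dots,X_n\rangle$ be a self-adjoint polynomial approximation of $v$ from Lemma \ref{approximation}, so $v_k=v_k^*$, $\sup_k\|v_k\|<\infty$ and $\|v_k-v\|_2\to 0$. Applying the Leibniz rule to $\partial_i(v_k P)$ and using the exact identity $Pu=0$ yields
\begin{equation*}
\partial_i(v_k P)(1\otimes u)=(v_k\otimes 1)(\partial_i P)(1\otimes u)\quad\text{in }M\overline{\otimes}M.
\end{equation*}
As $k\to\infty$ the right-hand side converges in $L^2(M\overline{\otimes}M,\tau\otimes\tau)$ to $(v\otimes 1)(\partial_i P)(1\otimes u)$, so it suffices to show that the left-hand side tends to zero in $L^2$.

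\textbf{Analytic step.} Fix $A,B\in\C\langle X_1,\dots,X_n\rangle$ and observe, using $u=u^*$, that
$\langle\partial_i(v_k P)(1\otimes u),A\otimes B\rangle=\langle\partial_i(v_k P),A\otimes Bu\rangle$. Pick now a polynomial approximation $u_l$ of $u$ from Lemma \ref{approximation}. Since $A\otimes Bu_l\in\C\langle X_1,\dots,X_n\rangle^{\otimes 2}\subseteq D(\partial_i^*)$, the duality pairing gives
$\langle\partial_i(v_k P),A\otimes Bu_l\rangle=\langle v_k P,\partial_i^*(A\otimes Bu_l)\rangle$. Voiculescu's formula (Lemma \ref{Voi-formula}) together with Dabrowski's estimates (Lemma \ref{Dab-estimates}) produces a bound of the form
\begin{equation*}
\|\partial_i^*(A\otimes Bu_l)\|_2\leq C\,\|\xi_i\|_2\,\|A\|\,\|B\|\,\|u_l\|,
\end{equation*}
which is uniform in $l$ by Lemma \ref{approximation}(ii). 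Therefore $|\langle v_k P,\partial_i^*(A\otimes Bu_l)\rangle|\leq C'\|v_k P\|_2$ uniformly in $l$, and passing to the limit $l\to\infty$ (legitimate since $A\otimes Bu_l\to A\otimes Bu$ in $L^2$ while $\partial_i(v_k P)$ is a fixed element of $L^2$) preserves this bound for $\langle\partial_i(v_k P),A\otimes Bu\rangle$. Since $\|v_k P\|_2\to\|vP\|_2=0$ as $k\to\infty$ (the adjoint form of $P^*v=0$), the inner product $\langle(v\otimes 1)(\partial_i P)(1\otimes u),A\otimes B\rangle$ vanishes. Density of $\C\langle X_1,\dots,X_n\rangle^{\otimes 2}$ in $L^2(\vN(X_1,\dots,X_n)\overline{\otimes}\vN(X_1,\dots,X_n),\tau\otimes\tau)$ then forces $(v\otimes 1)(\partial_i P)(1\otimes u)=0$.

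\textbf{Main obstacle.} Neither approximating sequence individually satisfies $v_k P=0$ or $Pu_l=0$, so the two hypotheses must be used asymmetrically: $Pu=0$ is used algebraically, with the \emph{exact} $u$, to kill the Leibniz error term $(\partial_i v_k)(1\otimes Pu)$; and $vP=0$ is used only analytically via $\|v_k P\|_2\to 0$. Making this work requires that the $l$-dependence be uniform in the duality pairing---without the Dabrowski estimate the term $(\tau\otimes\id)\partial_i(Bu_l)$ appearing inside $\partial_i^*(A\otimes Bu_l)$ could diverge as $l\to\infty$, and the nested limit in $k$ and $l$ would be out of reach.
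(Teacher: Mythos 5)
Your proof is correct, and although it draws on exactly the same toolbox as the paper (the Kaplansky approximation of Lemma \ref{approximation}, the adjoint $\partial_i^\ast$, and the norm bounds coming from Lemmas \ref{Voi-formula} and \ref{Dab-estimates}), it organizes the argument in a genuinely different way. The paper treats $u$ and $v$ symmetrically: it approximates both simultaneously, expands $\langle Pu_k,\partial_i^\ast(v_kQ_1\otimes Q_2)\rangle$ by applying the Leibniz rule to $\partial_i(Pu_k)$, and must then control four error terms, one of which involves $\partial_i u_k$ and therefore requires the extra ``partial integration'' estimate $\|(\id\otimes\tau)((\partial_iY_1)(1\otimes Y_2))\|_2\le 4\|\xi_i\|_2\|Y_1\|\,\|Y_2\|$. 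You use the two hypotheses asymmetrically: $Pu=0$ is invoked exactly, with the genuine $u$, to annihilate the Leibniz cross term $(\partial_i v_k)(1\otimes Pu)$ in $\partial_i(v_kP)(1\otimes u)$, so the approximants $u_l$ are never differentiated; only $v$ passes through the analytic machinery via $\|v_kP\|_2\to\|vP\|_2=0$ (the adjoint of $P^\ast v=0$). This collapses the error analysis to the single bound $\|\partial_i^\ast(Y_1\otimes Y_2)\|_2\le 3\|\xi_i\|_2\|Y_1\|\,\|Y_2\|$, at the price of a nested limit ($l\to\infty$ against the fixed $L^2$ vector $\partial_i(v_kP)$, then $k\to\infty$), which you handle correctly because your bound is uniform in $l$; your closing remark correctly identifies that the Dabrowski estimate is what makes this uniformity available. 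One small expository point: you announce that the left-hand side tends to zero in $L^2$, but what you actually prove is that its pairing with each $A\otimes B$ tends to zero; this suffices, since the right-hand side converges in $L^2$-norm to $(v\otimes 1)(\partial_iP)(1\otimes u)$ and the polynomial elementary tensors are total in $L^2(\vN(X_1,\dots,X_n)\overline{\otimes}\vN(X_1,\dots,X_n),\tau\otimes\tau)$, but the claim should be stated as weak convergence.
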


\begin{proof}
Let $(u_k)_{k\in\N}$ and $(v_k)_{k\in\N}$ be sequences as described in Lemma \ref{approximation} for $u$ and $v$, respectively. Choose arbitrary non-commutative polynomials $Q_1,Q_2\in\C\langle X_1,\dots,X_n\rangle$. Then, for $i=1,\dots,n$ and any $k\in\N$, we can check that
\begin{align*}
\lefteqn{\overbrace{\langle Pu_k, \partial_i^\ast(v_k Q_1\otimes Q_2) \rangle}^{\text{(i)}}}\\
&= \langle \partial_i (Pu_k), v_k Q_1 \otimes Q_2\rangle\\
&= \langle (\partial_i P) 1 \otimes u_k, v_k Q_1 \otimes Q_2\rangle + \langle P \otimes 1 (\partial_i u_k), v_k Q_1 \otimes Q_2\rangle\\
&= \langle (\partial_i P) 1 \otimes (u_k-u), v_k Q_1 \otimes Q_2\rangle + \langle (\partial_i P) 1\otimes u, v_k Q_1 \otimes Q_2\rangle\\
& \qquad + \langle P \otimes 1 (\partial_i u_k), v_k Q_1 \otimes Q_2\rangle\\
&= \langle (\partial_i P) 1 \otimes (u_k-u), v_k Q_1 \otimes Q_2\rangle + \langle (\partial_i P) 1\otimes u, (v_k-v) Q_1 \otimes Q_2\rangle\\
& \qquad + \langle (\partial_i P) 1\otimes u, v Q_1 \otimes Q_2\rangle + \langle P \otimes 1 (\partial_i u_k), v_k Q_1 \otimes Q_2\rangle\\
&= \langle (\partial_i P) 1 \otimes (u_k-u), v_k Q_1 \otimes Q_2\rangle + \langle (\partial_i P) 1\otimes u, (v_k-v) Q_1 \otimes Q_2\rangle\\
& \qquad + \langle v \otimes 1 (\partial_i P) 1\otimes u, Q_1 \otimes Q_2\rangle + \langle (\partial_i u_k) 1 \otimes Q_2^\ast, P^\ast v_k Q_1 \otimes 1\rangle\\
&= \underbrace{\langle (\partial_i P) 1 \otimes (u_k-u), v_k Q_1 \otimes Q_2\rangle}_{\text{(iii)}} + \underbrace{\langle (\partial_i P) 1 \otimes u, (v_k-v) Q_1 \otimes Q_2\rangle}_{\text{(iv)}}\\
& \qquad + \langle v \otimes 1 (\partial_i P) 1 \otimes u, Q_1 \otimes Q_2\rangle + \underbrace{\langle (\id\otimes\tau)((\partial_i u_k) 1 \otimes Q_2^\ast), P^\ast v_k Q_1\rangle}_{\text{(ii)}}.
\end{align*}
Now, we will separately discuss the terms appearing in the above calculation:
\begin{itemize}
 \item[(i)] According to Voiculescu's formula, which we recalled in Lemma \ref{Voi-formula}, we have for all $Y_1,Y_2 \in \C\langle X_1,\dots,X_n\rangle$ that
 \begin{align*}
 \lefteqn{\partial_i^\ast (Y_1 \otimes Y_2)}\\
  &= Y_1 \xi_i Y_2 - m_1(\id\otimes\tau\otimes\id)(\partial_i \otimes \id + \id \otimes \partial_i) Y_1 \otimes Y_2\\
  &= Y_1 \xi_i Y_2 - (\id\otimes\tau)(\partial_i Y_1) Y_2 - Y_1 (\tau\otimes\id)(\partial_i Y_2)\\
  &= \partial_i^\ast(Y_1 \otimes 1) Y_2 - Y_1 (\tau\otimes\id)(\partial_i Y_2)\\
  &= \partial_i^\ast(Y_1 \otimes 1) Y_2 - Y_1 (\id\otimes\tau)(\partial_i Y_2^\ast)^\ast
 \end{align*}
 and thus, by applying the estimates \eqref{Dab-estimates-2} and \eqref{conjugate_norms}, that
 \begin{align*}
 \lefteqn{\|\partial_i^\ast(Y_1 \otimes Y_2)\|_2}\\
 &\leq \|\partial_i^\ast(Y_1 \otimes 1)\|_2 \|Y_2\| + \|Y_1\| \|(\id\otimes\tau)(\partial_i Y_2^\ast)\|_2\\
 &\leq 3 \|\xi_i\|_2 \|Y_1\| \|Y_2\|.
 \end{align*}
 Therefore, we may deduce that
 \begin{align*}
 \lefteqn{|\langle Pu_k, \partial_i^\ast(v_k Q_1\otimes Q_2) \rangle|}\\
 &\leq \|Pu_k\|_2 \|\partial_i^\ast(v_k Q_1 \otimes Q_2)\|_2\\
 &= \|P(u_k-u)\|_2 \|\partial_i^\ast(v_k Q_1 \otimes Q_2)\|_2\\
 &\leq 3 \|\xi_i\|_2 \|P\| \|Q_1\| \|Q_2\| \|v_k\| \|u_k-u\|_2.
 \end{align*}

 \item[(ii)] We apply partial integration in order to obtain
 \begin{align*}
 \lefteqn{(\id\otimes\tau)((\partial_i Y_1) 1 \otimes Y_2)}\\
 &= (\id\otimes\tau)(\partial_i(Y_1 Y_2)) - (\id\otimes\tau)(Y_1 \otimes 1 (\partial_i Y_2))\\
 &= (\id\otimes\tau)(\partial_i(Y_1 Y_2)) - Y_1 (\id\otimes\tau)(\partial_i Y_2)
 \end{align*}
 for arbitrary $Y_1,Y_2\in\C\langle X_1,\dots,X_n\rangle$. From this, we can easily deduce by using \eqref{Dab-estimates-2} that
 \begin{align*}
 \lefteqn{\|(\id\otimes\tau)((\partial_i Y_1) 1 \otimes Y_2)\|_2}\\
 &\leq \|(\id\otimes\tau)(\partial_i(Y_1 Y_2))\|_2 + \|Y_1\| \|(\id\otimes\tau)(\partial_i Y_2)\|_2\\
 &\leq 4 \|\xi_i\|_2 \|Y_1\| \|Y_2\|.
 \end{align*}
 Hence, it follows that
 \begin{align*}
 \lefteqn{|\langle (\id\otimes\tau)((\partial_i u_k) 1 \otimes Q_2^\ast), P^\ast v_k Q_1\rangle|}\\
 &\leq \|(\id\otimes\tau)((\partial_i u_k) 1 \otimes Q_2^\ast)\|_2 \|P^\ast v_k Q_1\|_2\\
 &= \|(\id\otimes\tau)((\partial_i u_k) 1 \otimes Q_2^\ast)\|_2 \|P^\ast (v_k-v) Q_1\|_2\\
 &\leq 4 \|\xi_i\|_2 \|P\| \|Q_1\| \|Q_2\| \|u_k\| \|v_k-v\|_2
 \end{align*}

 \item[(iii)] Note that, according to \eqref{duality}, our calculations from (ii) imply
 $$\|(\tau\otimes\id)(Y_1 \otimes 1(\partial_i Y_2))\|_2 \leq 4 \|\xi_i\|_2 \|Y_1\| \|Y_2\|$$
 for all $Y_1,Y_2\in\C\langle X_1,\dots,X_n\rangle$. This allows us to deduce from
 \begin{align*}
 \lefteqn{\langle (\partial_i P) 1 \otimes (u_k-u), v_k Q_1 \otimes Q_2\rangle}\\
 &= \langle Q_1^\ast v_k \otimes 1 (\partial_i P), 1 \otimes Q_2(u_k-u)\rangle\\
 &= \langle (\tau\otimes\id)(Q_1^\ast v_k \otimes 1 (\partial_i P)), Q_2(u_k-u)\rangle
 \end{align*}
 that
 \begin{align*}
 \lefteqn{|\langle (\partial_i P) 1 \otimes (u_k-u), v_k Q_1 \otimes Q_2\rangle|}\\
 &\leq \|(\tau\otimes\id)(Q_1^\ast v_k \otimes 1 (\partial_i P))\|_2 \|Q_2(u_k-u)\|_2\\
 &\leq 4 \|\xi_i\|_2 \|P\| \|Q_1\| \|Q_2\| \|v_k\| \|u_k-u\|_2.
 \end{align*}

 \item[(iv)] It remains to observe that
 \begin{align*}
 \lefteqn{|\langle (\partial_i P) 1 \otimes u, (v_k-v) Q_1 \otimes Q_2\rangle|}\\
 &\leq \|\partial_i P\|_2 \|Q_1\| \|Q_2\| \|u\| \|v_k-v\|_2.
 \end{align*}
\end{itemize}
In the limit $k\rightarrow \infty$, a combination of all estimates proved in (i) up to (iv) shows that
$$\langle v \otimes 1 (\partial_i P) 1 \otimes u, Q_1 \otimes Q_2\rangle = 0.$$
Since $Q_1,Q_2\in\C\langle X_1,\dots,X_n\rangle$ were arbitrarily chosen, it follows
$$v \otimes 1 (\partial_i P) 1 \otimes u = 0$$
for all $i=1,\dots,n$ as claimed.
\end{proof}

\begin{remark}
Let $P\in\C\langle X_1,\dots,X_n\rangle$ be given and assume that there are $u=u^\ast,v=v^\ast \in \vN(X_1,\dots,X_n)$ such that $Pu=P^\ast v=0$ holds. Then, according to Proposition \ref{reduction-strong}, we know that $v\otimes 1 (\partial_j P) 1 \otimes u = 0$ for any $j=1,\dots,n$. If we replace now $P$ by $P^\ast$, the statement of Proposition \ref{reduction-strong} also gives $u\otimes 1(\partial_jP^\ast)1\otimes v=0$ for $j=1,\dots,n$. But we want to point out that this does not lead to any new information. Indeed, if we take adjoints in the initial statement
$$v\otimes 1(\partial_j P)1\otimes u = 0,$$
we get
$$1\otimes u(\partial_j P)^\ast v \otimes 1 = 0.$$
Then, if we apply the flip $\sigma: M \otimes M \to M \otimes M$, i.e. the $\ast$-homomorphism induced by $\sigma(a_1 \otimes a_2) := a_2 \otimes a_1$, it follows that
$$u\otimes 1\sigma((\partial_j P)^\ast)1\otimes v = 0.$$
An easy calculation on monomials shows $\sigma((\partial_j P)^\ast) = \partial_j P^\ast$, such that the above result reduces exactly to the statement obtained by replacing $P$ with $P^\ast$.
\end{remark}

Before doing the final step, we first want to test in two examples how strong the result in Proposition \ref{reduction-strong} is.

\begin{example}
For the self-adjoint polynomial $P=X_1X_2X_1$, we calculate $\partial_2 P = X_1 \otimes X_1$, such that $Pw=0$ implies according to Proposition \ref{reduction-strong} that $wX_1 \otimes X_1w = 0$ and therefore $X_1w=0$ holds.

Applying now Proposition \ref{reduction-strong} once again with respect to $\partial_1$, we end up with $w\otimes w=0$, such that $w=0$ follows.
\end{example}

\begin{example}
Take $P=X_1X_2+X_2X_1$. We have
\begin{align*}
\partial_1 P &= 1 \otimes X_2 + X_2 \otimes 1,\\
\partial_2 P &= X_1 \otimes 1 + 1 \otimes X_1
\end{align*}
and thus according to Proposition \ref{reduction-strong}
\begin{align*}
(X_2 w)^\ast (X_2 w) &= m_{X_2}( w \otimes 1 (\partial_1 P) 1 \otimes w) = 0,\\
(X_1 w)^\ast (X_1 w) &= m_{X_1}( w \otimes 1 (\partial_2 P) 1 \otimes w) = 0.
\end{align*}
We conclude $X_1w=X_2w=0$, from which we may deduce like above by a second application of Proposition \ref{reduction-strong} that $w=0$.
\end{example}

Although the above examples might give the feeling that Proposition \ref{reduction-strong} is strong enough to allow directly a successive reduction of any polynomial, the needed algebraic manipulations turn out to be obscure in general.

Anyhow, in contrast to Theorem \ref{main2}, any result like this would need a symmetric starting condition. Thus, we will use the following general lemma, which is an easy consequence of the polar decomposition and encodes the additional information that our statement is formulated in a tracial setting. 

\begin{lemma}\label{key-lemma}
Let $x$ be an element of any tracial $W^\ast$-probability space $(M,\tau)$ over some complex Hilbert space $H$. Let $p_{\ker(x)}$ and $p_{\ker(x^\ast)}$ denote the orthogonal projections onto $\ker(x)$ and $\ker(x^\ast)$, respectively.

The projections $p_{\ker(x)}$ and $p_{\ker(x^\ast)}$ belong both to $M$ and satisfy
$$\tau(p_{\ker(x)}) = \tau(p_{\ker(x^\ast)}).$$
Thus, in particular, if $\ker(x)$ is non-zero, then also $\ker(x^\ast)$ is a non-zero subspace of $H$.
\end{lemma}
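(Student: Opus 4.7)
The plan is to apply the polar decomposition theorem inside the von Neumann algebra $M$ and then exploit the trace property.

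First I would invoke the polar decomposition of $x$, which gives $x = v|x|$ with $|x| = (x^\ast x)^{1/2}$ the positive part and $v$ a partial isometry whose initial space is $\overline{\operatorname{ran}(|x|)} = \overline{\operatorname{ran}(x^\ast)}$ and whose final space is $\overline{\operatorname{ran}(x)}$. A standard theorem of von Neumann algebras states that when $x \in M$, both factors $|x|$ and $v$ automatically lie in $M$. Hence so do the projections $v^\ast v$ and $vv^\ast$, and consequently so do their complements $1 - v^\ast v$ and $1 - vv^\ast$.

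Next I would identify these projections. By construction, $v^\ast v$ is the projection onto the initial space of $v$, which is the orthogonal complement of $\ker(|x|) = \ker(x)$, so
\[
p_{\ker(x)} = 1 - v^\ast v.
\]
Analogously, $vv^\ast$ projects onto $\overline{\operatorname{ran}(x)} = \ker(x^\ast)^\perp$, giving
\[
p_{\ker(x^\ast)} = 1 - vv^\ast.
\]
In particular both projections belong to $M$, which is the first part of the statement.

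Finally I would use the tracial property of $\tau$, namely $\tau(v^\ast v) = \tau(vv^\ast)$, to conclude
\[
\tau(p_{\ker(x)}) = 1 - \tau(v^\ast v) = 1 - \tau(vv^\ast) = \tau(p_{\ker(x^\ast)}).
\]
The last assertion then follows from the faithfulness of $\tau$: if $\ker(x) \neq \{0\}$ then $p_{\ker(x)} \neq 0$, so $\tau(p_{\ker(x)}) > 0$, whence $\tau(p_{\ker(x^\ast)}) > 0$ and thus $p_{\ker(x^\ast)} \neq 0$. There is no real obstacle here; the only point worth care is recalling that the polar decomposition indeed takes place in $M$, rather than merely in $B(H)$.
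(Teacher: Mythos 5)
Your proof is correct and follows essentially the same route as the paper: polar decomposition $x = v|x|$ within $M$, identification of $p_{\ker(x)} = 1 - v^\ast v$ and $p_{\ker(x^\ast)} = 1 - vv^\ast$, and traciality of $\tau$ applied to $v^\ast v$ versus $vv^\ast$. Your explicit use of faithfulness for the final assertion is a detail the paper leaves implicit, but it is exactly the right justification.
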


\begin{proof}
We consider the polar decomposition $x = v(x^\ast x)^{1/2} = (x x^\ast)^{1/2} v$ of $x$, where $v\in M$ is a partial isometry mapping $\overline{\ran(x^\ast)}$ to $\overline{\ran(x)}$, such that
$$v^\ast v = p_{\overline{\ran(x^\ast)}} \qquad\text{and}\qquad v v^\ast = p_{\overline{\ran(x)}}.$$
Hence, it follows that
$$1-v^\ast v = p_{\ran(x^\ast)^\bot} = p_{\ker(x)} \qquad\text{and}\qquad 1 - v v^\ast = p_{\ran(x)^\bot} = p_{\ker(x^\ast)},$$
from which we may deduce by traciality of $\tau$ that indeed
$$\tau(p_{\ker(x)}) = \tau(1-v^\ast v) = \tau(1 - v v^\ast) = \tau(p_{\ker(x^\ast)}).$$
This concludes the proof.
\end{proof}

Combining Lemma \ref{key-lemma} with Proposition \ref{reduction-strong} will provide us with the desired reduction argument. Before giving the precise statement, let us introduce some notation. If $p\in M$ is any projection, we define a linear mapping
$$\Delta_{p,j}:\ \C\langle X_1,\dots,X_n\rangle \rightarrow \C\langle X_1,\dots,X_n\rangle$$
for $j=1,\dots,n$ by 
$$\Delta_{p,j} P := (\tau \otimes \id)((p\otimes 1) \partial_j P)$$ 
for any $P\in \C\langle X_1,\dots,X_n\rangle$.

\begin{corollary}\label{reduction-final}
Let $P\in \C\langle X_1,\dots,X_n\rangle$ and $w=w^\ast\in \vN(X_1,\dots,X_n)$ be given, such that $Pw=0$ holds true. If $w\neq0$, then there exists a projection $0\neq p\in \vN(X_1,\dots,X_n)$ such that $(\Delta_{p,j} P) w = 0$.
\end{corollary}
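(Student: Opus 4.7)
The plan is to symmetrize the hypothesis $Pw = 0$ so that Proposition \ref{reduction-strong} becomes applicable, and then to contract the resulting identity by applying the slice map $\tau \otimes \id$ in the first leg. The statement to prove is that for each $j = 1,\dots,n$ one has $(\Delta_{p,j} P)\, w = 0$ for a suitable nonzero self-adjoint projection $p \in \vN(X_1,\dots,X_n)$.

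The first step is to manufacture a companion equation of the form $P^\ast v = 0$ with $v = v^\ast$ nonzero, which is exactly what Lemma \ref{key-lemma} provides. Viewing $P$ as an element of the von Neumann algebra $\vN(X_1,\dots,X_n)$, its polar decomposition (and hence the kernel projections) lies in $\vN(X_1,\dots,X_n)$. Set $p := p_{\ker(P^\ast)}$. Since $w \neq 0$ and $w \in \ker(P)$, the projection $p_{\ker(P)}$ is nonzero, so by faithfulness of $\tau$ we have $\tau(p_{\ker(P)}) > 0$. Lemma \ref{key-lemma} then gives $\tau(p) = \tau(p_{\ker(P)}) > 0$, so $p$ is a nonzero self-adjoint projection in $\vN(X_1,\dots,X_n)$, and of course $P^\ast p = 0$ by construction.

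Now I apply Proposition \ref{reduction-strong} to the pair $(u,v) := (w,p)$: both are self-adjoint elements of $\vN(X_1,\dots,X_n)$, and both hypotheses $Pu = 0$ and $P^\ast v = 0$ hold. The proposition yields
$$(p \otimes 1)\, (\partial_j P)\, (1 \otimes w) = 0 \qquad \text{for every } j = 1,\dots,n.$$
Writing $\partial_j P = \sum_i a_i \otimes b_i$ and applying $\tau \otimes \id$, which is a right $M$-module map (since $(\tau \otimes \id)(x \otimes yw) = \tau(x)\, yw = ((\tau \otimes \id)(x \otimes y))\, w$), the left-hand side becomes
$$\sum_i \tau(p\, a_i)\, b_i\, w \;=\; \bigl((\tau \otimes \id)((p \otimes 1)\, \partial_j P)\bigr)\, w \;=\; (\Delta_{p,j} P)\, w,$$
which therefore vanishes for every $j$. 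This is exactly the claim.

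I do not expect a genuine obstacle here: the one conceptual point is to recognize that Lemma \ref{key-lemma} exists precisely to convert the asymmetric hypothesis ``$Pw = 0$ with $w$ self-adjoint'' into the symmetric pair of hypotheses required by Proposition \ref{reduction-strong}. Once this is observed, the remainder is a routine slice-map computation together with the fact that $\Delta_{p,j} P$ indeed lies in $\C\langle X_1,\dots,X_n\rangle$ (because $\tau(p\, a_i) \in \C$), so that $(\Delta_{p,j} P)\, w$ makes sense and vanishing in $L^2$ coincides with vanishing as an element of $M$.
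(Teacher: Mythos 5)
Your proof is correct and follows essentially the same route as the paper's: produce $p := p_{\ker(P^\ast)}$ via Lemma \ref{key-lemma}, feed the pair $(u,v)=(w,p)$ into Proposition \ref{reduction-strong}, and contract with $\tau\otimes\id$ using its right-module property. The only cosmetic remark is that ``$w\in\ker(P)$'' should read $\ran(w)\subseteq\ker(P)$, since $w$ is an operator rather than a vector; the conclusion $p_{\ker(P)}\neq 0$ is unaffected.
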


\begin{proof}
Since $Pw=0$ and $w\neq0$, we see that $\{0\} \neq \ran(w) \subseteq \ker(P)$, such that we also must have $\ker(P^\ast) \neq \{0\}$ according to Lemma \ref{key-lemma}. The projection $p := p_{\ker(P^\ast)} \in\vN(X_1,\dots,X_n)$ thus satisfies $p\neq0$ and $P^\ast p = 0$. Proposition \ref{reduction-strong} tells us that $(p\otimes 1) (\partial_j P) (1 \otimes w) = 0$ for $j=1,\dots,n$ holds true. Hence, we get that
$$(\Delta_{p,j} P) w = (\tau\otimes\id)((p\otimes 1) \partial_j P) w = (\tau\otimes\id)((p\otimes 1) (\partial_j P) (1 \otimes w)) = 0,$$
which concludes the proof.
\end{proof}

Now, we are prepared to finish the proof of Theorem \ref{main2}. It suffices to show that, if $P\in\C\langle X_1,\dots,X_n\rangle$ and $w=w^\ast\in \vN(X_1,\dots,X_n)$ with $w\neq0$ are given such that $Pw=0$, then $P=0$ follows. For seeing this, write
$$P = a_0 + \sum_{k=1}^d \sum_{i_1,\dots,i_k = 1}^n a_{i_1,\dots,i_k} X_{i_1} \dots X_{i_k}.$$
Assume that the total degree $d$ of $P$ satisfies $d\geq1$. We choose any summand of highest degree
$$a_{i_1,\dots,i_d} X_{i_1} \dots X_{i_d}$$
of $P$, which is non-zero. Iterating Corollary \ref{reduction-final}, we see that there are non-zero projections $p_1,\dots,p_d \in \vN(X_1,\dots,X_n)$ such that
$$(\Delta_{p_d,i_d} \dots \Delta_{p_1,i_1} P) w = 0.$$
But since we can easily check that
$$\Delta_{p_d,i_d} \dots \Delta_{p_1,i_1} P = \tau(p_d) \dots \tau(p_1) a_{i_1,\dots,i_d},$$
this leads us to $a_{i_1,\dots,i_d} = 0$, which contradicts our assumption. Thus, $P$ must be constant, and since $w\neq0$, we end up with $P=0$. This concludes the proof of Theorem \ref{main2}.

\bibliographystyle{amsalpha}
\bibliography{zero_divisors}

\end{document}